\DeclareMathOperator{\capa}{cap}
\newcommand{\defeq}{\mathrel{\mathop:}=}
\newcommand{\R}{{\mathbb R}}
\begin{document}
\title[Weighted norm inequalities of $(1,q)$-type] {Weighted norm inequalities of $(1,q)$-type for integral 
and fractional maximal operators}
\author{Stephen Quinn}
\address{Department of Mathematics, University of Missouri, Columbia, MO  65211}
\email{stephen.quinn@mail.missouri.edu}
\author{ Igor E. Verbitsky}
\address{Department of Mathematics, University of Missouri, Columbia, MO  65211}
\email{verbitskyi@missouri.edu}

\subjclass[2010]{Primary 35J61, 42B37; Secondary 31B15, 42B25}
\keywords{Weighted norm inequalities, sublinear elliptic equations, Green's functions, weak maximum principle, fractional maximal operators, Carleson measures}
\begin{abstract}
We study weighted norm inequalities of $(1,q)$- type for $0<q<1$, 
$$
\Vert \mathbf{G} \nu \Vert_{L^q(\Omega, d \sigma)} \le C \, \Vert \nu \Vert, \quad \text{for all positive measures $\nu$ in $\Omega$}, 
$$ 
along with their weak-type counterparts, where $\Vert \nu \Vert=\nu(\Omega)$,  and $G$ 
is an integral operator with nonnegative kernel, $$\mathbf{G} \nu(x) = \int_\Omega G(x, y) d \nu(y).$$ 

These problems are motivated by sublinear elliptic equations  
in a domain $\Omega\subset\R^n$ with non-trivial Green's function $G(x, y)$ associated with the Laplacian, fractional Laplacian, or more general elliptic operator.  

We also treat fractional maximal operators $M_\alpha$ ($0\le \alpha<n$) on $\R^n$, and  characterize strong- and weak-type  $(1,q)$-inequalities for $M_\alpha$ and more general maximal operators, 
 as well as $(1,q)$-Carleson measure inequalities for Poisson integrals.  
\end{abstract}

\dedicatory{Dedicated to Richard L. Wheeden}

\maketitle

\numberwithin{equation}{section}
\newtheorem{theorem}{Theorem}[section]
\newtheorem{lemma}[theorem]{Lemma}
\newtheorem{remark}[theorem]{Remark}
\newtheorem{cor}[theorem]{Corollary}
\newtheorem{prop}[theorem]{Proposition}
\newtheorem{defn}[theorem]{Definition}
\allowdisplaybreaks

\section{Introduction}

In this paper, we discuss recent results on weighted norm inequalities of $(1,q)$- type in the case $0<q<1$,   
\begin{equation}\label{strong-intr}
\Vert\mathbf{G} \nu\Vert_{L^q(\Omega, d \sigma)} \le C \, \Vert\nu\Vert, 
\end{equation} 
for all positive measures $\nu$ in $\Omega$, where $\Vert\nu\Vert=\nu(\Omega)$, 
and  $\mathbf{G}$ is an integral operator with nonnegative kernel, 
$$\mathbf{G} \nu(x) = \int_\Omega G(x, y) d \nu(y).$$

 Such problems are motivated by sublinear elliptic equations  
of the type 
\begin{equation*}\begin{cases}\label{introdir}-\Delta u = \sigma u^q \,\,  \text{ in } \Omega,\\
\; u = 0\text{ on }\partial\Omega,\end{cases}\end{equation*} 
in the case $0<q<1$, where $\Omega$ is an open set in $\R^n$ with non-trivial Green's function $G(x, y)$,   and $\sigma \ge 0$ is an arbitrary locally integrable function, or locally finite measure in $\Omega$.  

The only restrictions imposed on the kernel $G$ are that it is quasi-symmetric and satisfies a weak maximum principle. In particular, $\mathbf{G}$ can be a Green operator associated with the Laplacian, a more general elliptic operator  (including the fractional Laplacian), or a convolution operator on $\R^n$ with radially symmetric 
decreasing kernel $G(x, y)= k(|x-y|)$ (see \cite{AH}, \cite{L}). 

In particular, we consider in detail the one-dimensional case where $\Omega=\R_{+}$ and $G(x, y) = \min(x, y)$. We deduce explicit characterizations of the corresponding $(1, q)$-weighted norm inequalities, give 
explicit necessary and sufficient conditions for the existence of weak solutions, and obtain sharp 
two-sided pointwise estimates of solutions.

We also characterize weak-type counterparts of \eqref{strong-intr}, namely, 
\begin{equation}\label{weak-intr}
\Vert\mathbf{G} \nu\Vert_{L^{q, \infty}(\Omega, d \sigma)} \le C \, \Vert\nu\Vert. 
\end{equation} 

Along with integral operators, we treat fractional maximal operators $M_\alpha$ with $0\le \alpha<n$ 
on $\R^n$, and  characterize both strong- and weak-type  $(1,q)$-inequalities for $M_\alpha$, 
and more general maximal operators. Similar problems for Riesz potentials were studied earlier 
in \cite{CV1}--\cite{CV3}. Finally, we apply our results for the integral operators to the Poisson kernel to characterize a $(1,q)$-Carleson measure inequality.

\section{Integral Operators}
\subsection{Strong-Type $(1,q)$-Inequality for Integral Operators} Let $\Omega \subseteq \R^n$ be a connected open set. By $\mathcal{M}^+(\Omega)$ we denote 
	the class of all nonnegative locally finite Borel measures in $\Omega$. Let $G\colon \Omega \times \Omega \to [0, +\infty]$ be a nonnegative lower-semicontinuous kernel. We will assume throughout this paper that $G$ is quasi-symmetric, i.e., there exists a constant $a>0$ such that 
	\begin{equation}
		\label{q-symm}
	a^{-1} \, G(x, y) \le G(y, x) \le a \, G(x, y), \quad x, y \in \Omega.
\end{equation}

If $\nu  \in \mathcal{M}^+(\Omega)$, then by $\mathbf{G} \nu$ and $\mathbf{G}^{*} \nu$ we denote the integral operators  (potentials) defined respectively by 
\begin{equation}
		\label{pot}
		\mathbf{G} \nu(x) = \int_{\Omega} G(x, y) \, d \nu(y), \quad \mathbf{G}^{*}\nu(x) = \int_{\Omega} G(y, x) \, d \nu(y), \quad x \in \Omega. 
		\end{equation}

We say that the kernel $G$ satisfies the \textit{weak maximum principle} if, for any constant $M>0$,  the inequality 
		\[ \mathbf{G}\nu(x) \le M \quad \text{for all} \, \,  \, x \in S(\nu) \]
		 implies
		\[ \mathbf{G}\nu(x) \le h M \quad \text{for all} \, \, \,   x \in \Omega, \] 
		where $h\ge 1$ is a constant, and $S(\nu) \defeq {\rm supp} \, \nu$. 
	When $h = 1$, we say that $\mathbf{G}\nu$ satisfies the \textit{strong maximum principle}. 
	
	It is well-known that Green's kernels associated with many partial differential operators are quasi-symmetric, and satisfy the weak maximum principle (see, e.g., \cite{An1}, \cite{An2}, \cite{L}).

The kernel $G$ is said to be \textit{degenerate} with respect to $\sigma \in \mathcal{M}^+(\Omega)$ provided there exists a set $A \subset \Omega$ with $\sigma(A) > 0$ and
		\[ G(\cdot, y) = 0 \quad \text{$d\sigma$-a.e. for $y \in A$.} \] 
		Otherwise, we will say that $G$ is \textit{non-degenerate} with respect to $\sigma$. (This notion was 
		introduced in \cite{Sinnamon} in the context of $(p, q)$-inequalities for positive operators 
		$T\colon L^p \to L^q$ in the case $1<q<p$.) 

Let $0<q<1$, and let $G$ be a kernel on $\Omega\times\Omega$. For $\sigma \in  \mathcal{M}^+(\Omega)$, 
we consider the problem of the existence of a \textit{positive solution} $u$ 
to the integral equation 
\begin{equation}
		\label{int-eq}
		u= \mathbf{G} (u^q d \sigma) \quad \text{in} \,\, \, \Omega, \quad 0<u<+\infty \,\, \, d\sigma{\rm -a.e.}, \quad  u\in L^q_{{\rm loc}} (\Omega). 
		\end{equation}
		We call $u$ a positive \textit{supersolution} if  
		\begin{equation}
		\label{int-sup}
		u\ge  \mathbf{G} (u^q d \sigma) \quad \text{in} \,\, \, \Omega, \quad 0<u<+\infty \,\,\, d\sigma{\rm -a.e.}, \quad  u\in L^q_{{\rm loc}} (\Omega). 
\end{equation}
		
		This is a generalization of the sublinear elliptic problem (see, e.g., \cite{BK}, \cite{BO},  and the literature cited there): 
		\begin{equation}\label{pde} 
		\begin{cases} 
		-\Delta u = \sigma u^q \quad \text{in}  \, \, \Omega,\\
\; u = 0 \quad \text{on} \, \, \partial\Omega, \end{cases}
\end{equation} 
where $\sigma$ is a nonnegative locally integrable function, or measure, in $\Omega$.

If $\Omega$ is a 
 bounded $C^2$-domain then solutions to \eqref{pde}  can be understood in the ``very weak'' sense (see, e.g.,  \cite{MV}). For general 
domains $\Omega$ with a nontrivial  Green function $G$ associated with the Dirichlet Laplacian $\Delta$ in $\Omega$, 
solutions $u$ are understood as in \eqref{int-eq}.

\begin{remark} In this paper, for the sake of simplicity, we sometimes consider positive solutions and supersolutions $u \in L^q(\Omega, d \sigma)$. In other words, we replace the natural local condition $u\in L^q_{{\rm loc}} (\Omega, d \sigma)$ with its global counterpart. Notice that the local condition is necessary for solutions (or supersolutions) 
to be properly defined.
\end{remark}

To pass from solutions $u$ which are globally in $L^q(\Omega, d \sigma)$ to  
all solutions $u\in L^q_{{\rm loc}} (\Omega, d \sigma)$ (for instance, very weak solutions to \eqref{pde}), 
one can use either  a localization method developed in \cite{CV2} (in the case of Riesz kernels on $\R^n$), or 
\textit{modified} kernels $\widetilde G(x, y) = \frac{G(x, y)}{m(x) \, m(y)}$, where the modifier 
$m(x) = \min \Big(1, G(x, x_0)\Big)$ (with a fixed pole $x_0 \in \Omega$)  plays the role of a regularized 
distance to the boundary $\partial \Omega$. One also needs to consider the corresponding $(1,q)$-inequalities with a weight  $m$ (see \cite{QV}). See the next section in the one-dimensional case 
where $\Omega=(0, +\infty)$. 

\begin{remark}  Finite energy solutions, for instance, solutions $u \in W^{1,2}_{0} (\Omega)$ to \eqref{pde},  require the global condition $u \in L^{1+q}(\Omega, d \sigma)$, and are easier to characterize (see \cite{CV1}, \cite{QV}).
\end{remark}

The following theorem is proved  in \cite{QV}. (The case where $\Omega=\R^n$ and $\mathbf{G}=(-\Delta)^{-\frac{\alpha}{2}}$ is the Riesz potential of order $\alpha\in (0, n)$ was considered earlier in \cite{CV2}.)
	
\begin{theorem}\label{strong-thm}
		Let $\sigma \in \mathcal{M}^{+}(\Omega)$, and $0<q<1$.  Suppose $G$ is a quasi-symmetric kernel which satisfies the weak maximum principle.
		Then the following statements are equivalent:
		\begin{enumerate}
			\item There exists a positive constant $\varkappa=\varkappa(\sigma)$ such that 
			$$\Vert  \mathbf{G}\nu \Vert_{L^{q}(\sigma)} \le \varkappa \Vert \nu \Vert \quad \text{\rm for all} \,\,   \nu \in \mathcal{M}^{+}(\Omega).$$ 
			\item There exists a positive supersolution $u \in L^q(\Omega, d \sigma)$ to  \eqref{int-sup}. 
			\item There exists a positive solution $u\in L^q(\Omega, d \sigma)$ to  \eqref{int-eq}, provided 
			additionally that $G$ is non-degenerate with respect to $\sigma$.  
		\end{enumerate}
	\end{theorem}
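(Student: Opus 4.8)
The plan is to establish the cyclic chain $(3)\Rightarrow(2)\Rightarrow(1)\Rightarrow(3)$, keeping track of where non-degeneracy is needed: the core equivalence $(1)\Leftrightarrow(2)$ should require only quasi-symmetry and the weak maximum principle, while non-degeneracy enters solely in producing an honest solution in $(3)$. The implication $(3)\Rightarrow(2)$ is immediate, since a positive solution of \eqref{int-eq} is in particular a positive supersolution satisfying \eqref{int-sup} with the same $L^q(\Omega,d\sigma)$ bound.

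For $(2)\Rightarrow(1)$ I would test the inequality against the supersolution itself. Given a supersolution $u$, set $d\omega\defeq u^q\,d\sigma$; then $\omega$ is a finite measure (as $u\in L^q(\Omega,d\sigma)$), one has $\mathbf{G}\omega\le u$ by \eqref{int-sup}, and $u>0$ holds $d\sigma$-a.e., so $d\sigma=u^{-q}\,d\omega$. For arbitrary $\nu\in\mathcal{M}^{+}(\Omega)$ I would then write
\begin{equation*}
\int_\Omega (\mathbf{G}\nu)^q\,d\sigma=\int_\Omega\Big(\frac{\mathbf{G}\nu}{u}\Big)^q d\omega\le\int_\Omega\Big(\frac{\mathbf{G}\nu}{\mathbf{G}\omega}\Big)^q d\omega\le\Big(\int_\Omega\frac{\mathbf{G}\nu}{\mathbf{G}\omega}\,d\omega\Big)^q\,\omega(\Omega)^{1-q},
\end{equation*}
the last step being H\"older's inequality with exponents $1/q$ and $1/(1-q)$. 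Putting $d\mu\defeq(\mathbf{G}\omega)^{-1}d\omega$, Fubini's theorem and quasi-symmetry \eqref{q-symm} give $\int_\Omega \mathbf{G}\nu\,d\mu=\int_\Omega\mathbf{G}^{*}\mu\,d\nu\le a\int_\Omega\mathbf{G}\mu\,d\nu$. The crux is then the pointwise bound $\mathbf{G}\mu(x)=\int_\Omega \frac{G(x,y)}{\mathbf{G}\omega(y)}\,d\omega(y)\le C$ for all $x\in\Omega$, with $C$ depending only on $a$ and $h$; this is exactly where the weak maximum principle is indispensable, since one bounds $\mathbf{G}\mu$ first on $S(\omega)$ and then propagates the bound to all of $\Omega$. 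Granting this, the chain closes as $\int_\Omega (\mathbf{G}\nu)^q d\sigma\le (aC)^q\big(\int_\Omega u^q d\sigma\big)^{1-q}\Vert\nu\Vert^q$, which is $(1)$ with $\varkappa=aC\big(\int_\Omega u^q d\sigma\big)^{(1-q)/q}$.

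For $(1)\Rightarrow(3)$ I would combine a construction with an a priori bound read off from $(1)$ itself. The decisive observation is self-improving: if $u=\mathbf{G}(u^q d\sigma)$, then applying $(1)$ to $\nu=u^q d\sigma$ gives $\int_\Omega u^q d\sigma\le\varkappa^q\big(\int_\Omega u^q d\sigma\big)^q$, hence $\int_\Omega u^q d\sigma\le\varkappa^{q/(1-q)}<\infty$, and the same estimate holds for any supersolution. I would realize the solution as a limit over an exhaustion $\Omega_k\uparrow\Omega$ by sets on which $\sigma_k\defeq\sigma|_{\Omega_k}$ has bounded potential: on each $\Omega_k$ a positive solution $u_k$ is produced by the standard sub/supersolution iteration for the monotone, concave ($0<q<1$) operator $\mathbf{G}(\,\cdot\,^q d\sigma_k)$, these are increasing in $k$ by comparison, and the uniform bound $\int_\Omega u_k^q d\sigma\le\varkappa^{q/(1-q)}$ permits passage to the limit by monotone convergence, yielding a solution $u\in L^q(\Omega,d\sigma)$. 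Dropping non-degeneracy, the explicit choice $u=c\,(\mathbf{G}\sigma)^{1/(1-q)}$ is a positive supersolution by the maximum-principle iterated-potential inequality $\mathbf{G}\big((\mathbf{G}\sigma)^{q/(1-q)}d\sigma\big)\le C\,(\mathbf{G}\sigma)^{1/(1-q)}$, with membership in $L^q(\Omega,d\sigma)$ reducing to the finiteness of $\int_\Omega(\mathbf{G}\sigma)^{q/(1-q)}\,d\sigma$, so that $(1)\Rightarrow(2)$ follows without non-degeneracy.

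The two points I expect to be genuinely delicate are, first, the maximum-principle estimate $\mathbf{G}\big((\mathbf{G}\omega)^{-1}d\omega\big)\le C$ driving $(2)\Rightarrow(1)$, and second, ensuring that the solution produced in $(1)\Rightarrow(3)$ is nontrivial, i.e.\ that $u>0$ holds $d\sigma$-a.e.\ rather than collapsing to $u\equiv 0$. It is precisely the latter where non-degeneracy of $G$ with respect to $\sigma$ is used: it guarantees $\mathbf{G}(u^q d\sigma)>0$ $d\sigma$-a.e.\ once $u$ is positive on some $S(\sigma_{k_0})$, after which monotonicity of the iteration and the weak maximum principle propagate positivity to the whole limit.
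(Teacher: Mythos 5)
There is a genuine gap in your argument for $(2)\Rightarrow(1)$: the ``crux'' pointwise bound $\mathbf{G}\big((\mathbf{G}\omega)^{-1}d\omega\big)\le C(a,h)$ is false, even for symmetric kernels satisfying the strong maximum principle. Take $\Omega=(0,+\infty)$, $G(x,y)=\min(x,y)$ (symmetric, $h=1$, as used in Section~2.2), and $d\omega(y)=y^{-2}\chi_{(1,\infty)}(y)\,dy$, so $\Vert\omega\Vert=1$ and $\mathbf{G}\omega(y)=1+\ln y$ for $y>1$. Then
\begin{equation*}
\sup_{x}\ \mathbf{G}\Big(\tfrac{d\omega}{\mathbf{G}\omega}\Big)(x)\ \ge\ \int_1^{+\infty}\frac{y}{(1+\ln y)\,y^{2}}\,dy\ =\ \int_1^{+\infty}\frac{dy}{y(1+\ln y)}\ =\ +\infty .
\end{equation*}
This $\omega$ does arise from a legitimate instance of (2): with $d\sigma=(1+\ln y)^{-q}y^{-2}\chi_{(1,\infty)}\,dy$ and $u=\mathbf{G}\omega$ one has $u=\mathbf{G}(u^{q}d\sigma)$ and $\int u^{q}d\sigma=1$, yet your key estimate fails. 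The structural reason is that the weak maximum principle controls $\mathbf{G}\big((\mathbf{G}\omega)^{-s}d\omega\big)\le \frac{h^{s}}{1-s}(\mathbf{G}\omega)^{1-s}$ only for $0<s<1$; the constant blows up precisely at the exponent $s=1$ you need. Consequently the strong $L^{1}(\omega)$ bound $\int \frac{\mathbf{G}\nu}{\mathbf{G}\omega}\,d\omega\le C\Vert\nu\Vert$ (equivalent to your pointwise claim via $\nu=\delta_x$) is unavailable, and your H\"older step has nothing to pair with. The correct substitute, which is exactly what \cite{QV} uses and what this paper quotes in the proof of Theorem~\ref{weak-thm}, is the weak-type sublinear Schur lemma $\big\Vert \mathbf{G}\nu/\mathbf{G}\omega\big\Vert_{L^{1,\infty}(\omega)}\le C(a,h)\,\Vert\nu\Vert$, combined with Kolmogorov's inequality $\int f^{q}\,d\omega\le \frac{1}{1-q}\Vert f\Vert_{L^{1,\infty}(\omega)}^{q}\,\Vert\omega\Vert^{1-q}$ in place of H\"older. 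With that replacement your reduction $\int(\mathbf{G}\nu)^q d\sigma\le\int(\mathbf{G}\nu/\mathbf{G}\omega)^q d\omega$ does close the implication.

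A secondary problem occurs in your $(1)\Rightarrow(3)$ and in the claimed non-degeneracy-free route to (2): the inequality $\mathbf{G}\big((\mathbf{G}\sigma)^{q/(1-q)}d\sigma\big)\le C(\mathbf{G}\sigma)^{1/(1-q)}$ is false in general (for Riesz kernels with $\sigma$ the unit ball's Lebesgue measure, the left side decays like $\mathbf{G}\sigma$ at infinity, much slower than $(\mathbf{G}\sigma)^{1/(1-q)}$), so $c(\mathbf{G}\sigma)^{1/(1-q)}$ is \emph{not} a supersolution; only the reverse (subsolution) inequality holds, which is why the paper's iteration starts from $c_0(\mathbf{G}\sigma)^{1/(1-q)}$ as a subsolution and why the matching upper bounds in Theorem~\ref{one-pde-eq} require the extra potential $\mathbf{K}\sigma$. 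Note also that the paper itself does not reprove Theorem~\ref{strong-thm} but cites \cite{QV}, where $(1)\Rightarrow(2)$ is obtained abstractly from Gagliardo's lemma (cf.\ Remark~2.4), not from an explicit supersolution; your overall architecture $(3)\Rightarrow(2)\Rightarrow(1)\Rightarrow(3)$ and the a priori bound $\int u^{q}d\sigma\le\varkappa^{q/(1-q)}$ are consistent with that source, but the two estimates above must be repaired as indicated.
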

	
	\begin{remark} The implication {\rm(1) $\Rightarrow$ (2)} in Theorem \ref{strong-thm} holds for any nonnegative kernel $G$, without assuming 
	that it is either quasi-symmetric, or satisfies the weak maximum principle.  This is a consequence 
	of Gagliardo's lemma \cite{G}; see details in \cite{QV}. 
	\end{remark}
	
	\begin{remark} The implication {\rm (3) $\Rightarrow$ (1)} generally fails  for kernels $G$ which do not satisfy  the weak maximum principle (see examples in \cite{QV}).  \end{remark}
	
	The following corollary of Theorem \ref{strong-thm} is obtained in \cite{QV}. 
	\begin{cor}\label{cor} Under the assumptions of Theorem \ref{strong-thm}, if there exists a positive supersolution $u \in L^q(\Omega, \sigma)$ to  \eqref{int-sup}, then $\mathbf{G}\sigma \in L^{\frac{q}{1-q}} (\Omega, d \sigma)$. 
	
	Conversely, if $\mathbf{G}\sigma \in L^{\frac{q}{1-q}, 1} (\Omega, d \sigma)$, then there exists 
	a non-trivial supersolution $u \in L^q(\Omega, \sigma)$ to \eqref{int-sup}  (respectively,  a solution $u$, provided $G$ is non-degenerate with respect to $\sigma$). 
\end{cor}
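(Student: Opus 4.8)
The plan is to prove the two implications separately, using Theorem \ref{strong-thm} as the bridge whenever the strong-type $(1,q)$-inequality is involved. For the necessity I argue directly from a supersolution $u$, producing the pointwise comparison $\mathbf{G}\sigma(x)\le C\,u(x)^{1-q}$ for $\sigma$-a.e.\ $x$; for the sufficiency I establish the strong-type inequality $\|\mathbf{G}\nu\|_{L^q(\sigma)}\le\varkappa\|\nu\|$ from the Lorentz hypothesis and then invoke the implications $(1)\Rightarrow(2)$ and $(1)\Rightarrow(3)$ of Theorem \ref{strong-thm} to get the supersolution and, under non-degeneracy, the solution.

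\emph{Necessity.} Let $u\in L^q(\Omega,d\sigma)$ satisfy $u\ge \mathbf{G}(u^q\,d\sigma)$. Since $(1-q)\cdot\frac{q}{1-q}=q$, the comparison $\mathbf{G}\sigma\le C\,u^{1-q}$ integrates to $\int (\mathbf{G}\sigma)^{\frac{q}{1-q}}\,d\sigma\le C^{\frac{q}{1-q}}\int u^q\,d\sigma<\infty$, giving $\mathbf{G}\sigma\in L^{\frac{q}{1-q}}(\Omega,d\sigma)$, so it suffices to prove the comparison. I split $\mathrm{supp}\,\sigma$ into the dyadic level sets $\Omega_k=\{x:2^k<u(x)\le 2^{k+1}\}$ and set $\sigma_k=\sigma|_{\Omega_k}$. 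Fix $x$ with $u(x)\approx 2^j$. For the \emph{upper} part, when $k\ge j$ and $y\in\Omega_k$ one has $u(y)^q\ge 2^{jq}$, hence $\mathbf{G}\sigma_k(x)\le 2^{-jq}\int_{\Omega_k}G(x,y)u(y)^q\,d\sigma(y)$; summing over $k\ge j$ gives at most $2^{-jq}\mathbf{G}(u^q\sigma)(x)\le 2^{-jq}u(x)\lesssim 2^{j(1-q)}$. For the \emph{lower} part, restricting the supersolution inequality to $\sigma_k$ yields, for $z\in S(\sigma_k)$, $u(z)\ge \mathbf{G}(u^q\sigma_k)(z)\ge 2^{kq}\mathbf{G}\sigma_k(z)$, so $\mathbf{G}\sigma_k\le 2^{-kq}u\le 2\cdot 2^{k(1-q)}$ on $S(\sigma_k)$. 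Here the \emph{weak maximum principle} enters: it upgrades this to $\mathbf{G}\sigma_k\le h\cdot 2^{1+k(1-q)}$ on all of $\Omega$, and summing the resulting geometric series over $k<j$ gives again $\lesssim 2^{j(1-q)}$. Adding the two parts yields $\mathbf{G}\sigma(x)\le C\,2^{j(1-q)}\le C\,u(x)^{1-q}$.

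\emph{Sufficiency.} Assume $\mathbf{G}\sigma\in L^{\frac{q}{1-q},1}(\Omega,d\sigma)$ and write $w=\mathbf{G}\sigma$. By Theorem \ref{strong-thm} it is enough to prove $\|\mathbf{G}\nu\|_{L^q(\sigma)}\le\varkappa\|\nu\|$ for all $\nu\in\mathcal{M}^+(\Omega)$. I decompose dyadically by the level sets $E_k=\{2^k<w\le 2^{k+1}\}$ of $w$: on $S(\sigma|_{E_k})$ one has $\mathbf{G}(\sigma|_{E_k})\le w\le 2^{k+1}$, so the weak maximum principle gives $\mathbf{G}(\sigma|_{E_k})\le h\,2^{k+1}$ throughout $\Omega$. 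One first records the weak-type bound for $\mathbf{G}$ and then upgrades it to the strong-type bound by summing over the levels $E_k$; the quantity that governs this summation is precisely the Lorentz norm of $w$, whose finiteness is comparable to $\int_0^\infty\sigma(\{w>\lambda\})^{\frac{1-q}{q}}\,d\lambda<\infty$, and this is exactly the convergence needed to close the sum.

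\emph{Main obstacle.} The necessity argument is essentially self-contained once $\mathrm{supp}\,\sigma$ is stratified by the size of $u$; the only non-elementary input is the weak maximum principle, which is what tames the ``small-$u$'' far-field $\sum_{k<j}\mathbf{G}\sigma_k$ that a naive weighted estimate would leave divergent. The genuine difficulty is in the sufficiency, in the passage from the weak-type to the strong-type inequality: this is where the $r=1$ Lorentz refinement is forced, and it accounts for the gap between the necessary condition $\mathbf{G}\sigma\in L^{\frac{q}{1-q}}(\sigma)$ and the sufficient condition $\mathbf{G}\sigma\in L^{\frac{q}{1-q},1}(\sigma)$. A power-function ansatz $u=c\,(\mathbf{G}\sigma)^{1/(1-q)}$ does not close the argument, because the large-$w$ part of the pointwise inequality $\mathbf{G}\big((\mathbf{G}\sigma)^{\frac{q}{1-q}}\sigma\big)\le C\,(\mathbf{G}\sigma)^{\frac{1}{1-q}}$ that it would require fails under a mere integral hypothesis; one must therefore work at the level of the norm inequality and exploit the summability encoded in the $L^{\frac{q}{1-q},1}$-norm.
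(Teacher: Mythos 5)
The paper does not actually reprove this corollary (it is imported from \cite{QV}), so your argument has to stand on its own. Your \emph{necessity} half essentially does: the dyadic stratification of ${\rm supp}\,\sigma$ by the size of $u$ is a correct, self-contained way to recover the pointwise bound $\mathbf{G}\sigma \le C(h,q)\, u^{1-q}$ $\sigma$-a.e., which is the ($h$-dependent) form of \eqref{lower-1} that the paper would simply quote and integrate. The one point to tighten is the invocation of the weak maximum principle for $\sigma_k$: you establish $\mathbf{G}\sigma_k\le 2^{1+k(1-q)}$ only $\sigma$-a.e.\ on $\Omega_k=\{2^k<u\le 2^{k+1}\}$, whereas the principle as stated needs the bound everywhere on $S(\sigma_k)$, which may contain limit points of $\Omega_k$ where $u>2^{k+1}$ (and $\Omega_k$ is not closed, since $u$ need not be semicontinuous). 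This is repairable by the standard device of restricting $\sigma_k$ to the set where the bound holds, but it should be addressed.

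The \emph{sufficiency} half has a genuine gap. The reduction to proving statement (1) of Theorem \ref{strong-thm} is correct, but the mechanism you propose --- bound $\mathbf{G}(\sigma|_{E_k})\le h\,2^{k+1}$ by the weak maximum principle and ``sum over the levels $E_k$'' --- cannot reach the hypothesis $\mathbf{G}\sigma\in L^{\frac{q}{1-q},1}(\sigma)$. Every natural implementation of that plan produces the quantity $\sum_k 2^{kq}\sigma(E_k)^{1-q}$: for instance, H\"older's inequality with exponent $1/q$ together with quasi-symmetry gives $\varkappa(\sigma|_{E_k})^q\le (a h 2^{k+1})^q\,\sigma(E_k)^{1-q}$ and $\varkappa(\sigma)^q\le\sum_k\varkappa(\sigma|_{E_k})^q$; the same sum appears if one instead runs the layer-cake formula on each $E_k$ against the weak-type bound $\Vert \mathbf{G}\nu/\mathbf{G}\sigma\Vert_{L^{1,\infty}(\sigma)}\le C\Vert\nu\Vert$, or applies Lorentz--H\"older to $\mathbf{G}\nu=(\mathbf{G}\nu/\mathbf{G}\sigma)\cdot\mathbf{G}\sigma$. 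But since $q<1$ one has $\sum_k\bigl(2^k\sigma(E_k)^{(1-q)/q}\bigr)^q\ \ge\ \bigl(\sum_k 2^k\sigma(E_k)^{(1-q)/q}\bigr)^q$, so finiteness of $\sum_k 2^{kq}\sigma(E_k)^{1-q}$ is the condition $\mathbf{G}\sigma\in L^{\frac{q}{1-q},q}(\sigma)$, which is strictly stronger than $L^{\frac{q}{1-q},1}(\sigma)$ and is not implied by it (take $\sigma(E_k)^{(1-q)/q}=2^{-k}k^{-2}$ with $q\le 1/2$). So the assertion that ``the quantity that governs this summation is precisely the Lorentz norm of $w$'' is exactly the step that fails: the summation is governed by the second-index-$q$ quasinorm, not the second-index-$1$ norm. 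Capturing the $L^{\frac{q}{1-q},1}$ condition requires a different mechanism --- in \cite{QV}, \cite{CV2} one either builds a supersolution directly as a weighted superposition of truncated potentials with weights read off the distribution function of $\mathbf{G}\sigma$, or uses a Maurey/Gagliardo factorization with a density of the form $\eta(\mathbf{G}\sigma)^{-q/(1-q)}$ for a decreasing $\eta$ whose integral is comparable to $\Vert\mathbf{G}\sigma\Vert_{L^{q/(1-q),1}(\sigma)}$. Your closing paragraph correctly locates the difficulty, but the proposal does not overcome it.
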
 
	
	\subsection{The One-Dimensional Case} In this section, we consider positive weak solutions  
	  to sublinear ODEs of the type \eqref{pde} on the semi-axis $\R_{+}=(0, +\infty)$. It is instructive to consider the one-dimensional case where elementary characterizations 
	  of $(1, q)$-weighed norm inequalities, along with the corresponding existence theorems and explicit global pointwise estimates of  solutions are available. Similar results hold for sublinear equations on any interval 
	$(a, b) \subset \R$.

	Let $0<q<1$, and let $\sigma \in \mathcal{M}^{+} (\R_{+})$. Suppose $u$ is a positive weak solution to the equation 
			 \begin{equation}\label{one-dim}
		-u'' = \sigma u^q \quad \text{on} \,\, \,  \R_{+}, \quad u(0)=0, 
			\end{equation}
such that $\lim_{x \to +\infty}\frac{u(x)}{x}=0$. This condition at infinity ensures 
that $u$ does not contain a linear component. Notice that we assume that $u$ is concave and increasing on $[0, +\infty)$, and $\lim_{x \to 0^{+}} u(x)=0$.

	In terms of integral equations, we have $\Omega=\R_{+}$, and $G(x, y)=\min(x, y)$ is the Green function associated 
	with the Sturm-Liouville operator $\Delta u = u'' $ with zero boundary condition at $x=0$. Thus,  
	\eqref{one-dim} is equivalent to the equation 
\begin{equation}
		\label{one-dim-green}
		u(x) = \mathbf{G}(u^q d \sigma)(x) \defeq\int_0^{+\infty} \min(x, y) u(y)^q d \sigma (y), \quad x>0, 
			\end{equation}
			where $\sigma$ is a locally finite measure on $\R_{+}$, and 
			\begin{equation}\label{loc-cond} 
				\int_0^a y \, u(y)^q d \sigma(y) < +\infty, \quad \int_a^{+\infty} u(y)^q d \sigma (y)<+\infty, \quad  
				\text{{\rm for every}} \,\, a>0.
					\end{equation}
				 This ``local integrability'' condition ensures that the right-hand side of \eqref{one-dim-green} is well defined. Here intervals $(a, +\infty)$ are used in place of
				 balls $B(x, r)$ in $\R^n$.
				 
				  Notice that 
 \begin{equation}
\label{one-dir}
		u'(x) =  \int_x^{+\infty}  u(y)^q d \sigma(y), \quad x>0.
			\end{equation}
			Hence, $u$ satisfies the global integrability condition 
			\begin{equation}\label{glob-cond} 
				\int_0^{+\infty} u(y)^q d \sigma(y) < +\infty 
				\end{equation}
				if and only if $u'(0)< +\infty$. 
				
				The corresponding $(1,q)$-weighted norm inequality is given by 
				\begin{equation}
\label{one-weight}
				\Vert  \mathbf{G}\nu \Vert_{L^{q}(\sigma)} \le \varkappa \Vert \nu \Vert,
				\end{equation}
				where $\varkappa=\varkappa(\sigma)$ is a positive constant which does not depend on $\nu \in \mathcal{M}^+(\R_{+})$. Obviously,  \eqref{one-weight} is equivalent to 
				\begin{equation}
\label{hardy}
				\Vert H_{+} \nu + H_{-} \nu\Vert_{L^q(\sigma)} \le \varkappa \Vert \nu \Vert \quad \text{\rm for all} \,\,   \nu \in \mathcal{M}^+(\R_{+}),
				\end{equation}
				where $H_{\pm}$ is a pair of Hardy operators, 
				\begin{equation*}
				H_{+} \nu(x) = \int_0^x y \, d \nu(y), \quad H_{-} \nu(x) = x \int_x^{+\infty}  d \nu(y). 
		\end{equation*}
		
		The following proposition can be deduced from the known results on two-weight  Hardy inequalities 
		in the case $p=1$ and $0<q<1$ (see, e.g., \cite{SS}). We give here a simple  independent proof. 
		
		\begin{prop}\label{one-thm}
		Let $\sigma \in \mathcal{M}^{+}(\R_{+})$, and let $0<q<1$.  
		Then \eqref{one-weight} holds 
		if and only if 
				\begin{equation}
		\label{hardy-cond}
				\varkappa(\sigma)^q = \int_0^{+\infty} x^q d \sigma(x)< +\infty,
				\end{equation}
				where $\varkappa(\sigma)$ is the best constant in \eqref{one-weight}. 	\end{prop}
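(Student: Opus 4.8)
The plan is to establish the two inequalities $\int_0^{+\infty} x^q \, d\sigma(x) \le \varkappa(\sigma)^q$ and $\varkappa(\sigma)^q \le \int_0^{+\infty} x^q \, d\sigma(x)$ separately; together they give both the stated equivalence and the evaluation of the best constant. The whole argument rests on the elementary one-sided bound $\min(x, y) \le x$, which already pins down the correct constant without appealing to the general two-weight Hardy theory of \cite{SS}.

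For necessity (and the lower bound on $\varkappa(\sigma)^q$), I would test \eqref{one-weight} against the unit point masses $\nu = \delta_t$, $t > 0$. Since $\mathbf{G}\delta_t(x) = \min(x, t)$ and $\Vert \delta_t\Vert = 1$, the inequality reads $\int_0^{+\infty} \min(x, t)^q \, d\sigma(x) \le \varkappa(\sigma)^q$. Letting $t \to +\infty$ and invoking monotone convergence (the integrands increase pointwise to $x^q$) yields $\int_0^{+\infty} x^q \, d\sigma(x) \le \varkappa(\sigma)^q$. In particular, finiteness of $\varkappa(\sigma)$ forces \eqref{hardy-cond}.

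For sufficiency (and the matching upper bound), assume \eqref{hardy-cond}. For any $\nu \in \mathcal{M}^{+}(\R_{+})$, the bound $\min(x, y) \le x$ integrated in $y$ gives the pointwise estimate $\mathbf{G}\nu(x) = \int_0^{+\infty} \min(x, y)\, d\nu(y) \le x \, \nu(\R_{+}) = x \, \Vert\nu\Vert$. Raising to the power $q > 0$ and integrating against $d\sigma$ yields $\int_0^{+\infty} \mathbf{G}\nu(x)^q \, d\sigma(x) \le \Vert\nu\Vert^q \int_0^{+\infty} x^q \, d\sigma(x)$, that is, $\Vert \mathbf{G}\nu\Vert_{L^q(\sigma)} \le \bigl(\int_0^{+\infty} x^q\, d\sigma\bigr)^{1/q}\,\Vert\nu\Vert$. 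Hence \eqref{one-weight} holds with $\varkappa(\sigma)^q \le \int_0^{+\infty} x^q\, d\sigma$, completing the proof upon combining with the previous paragraph.

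There is essentially no hard step; the only thing to watch is the temptation to decompose a general $\nu$ into point masses and sum the individual estimates. Because $0 < q < 1$, the $L^q$-quasinorm obeys only $\Vert f + g\Vert_{L^q}^q \le \Vert f\Vert_{L^q}^q + \Vert g\Vert_{L^q}^q$, and the superadditivity of $t \mapsto t^q$ would then replace $\Vert\nu\Vert^q = (\sum_i c_i)^q$ by the strictly larger $\sum_i c_i^q$, producing a worse constant. The direct pointwise bound $\mathbf{G}\nu(x) \le x\,\Vert\nu\Vert$ sidesteps this entirely and delivers the sharp constant in one line.
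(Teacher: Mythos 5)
Your proof is correct and follows essentially the same route as the paper: the upper bound comes from the pointwise estimate $\mathbf{G}\nu(x)\le x\,\Vert\nu\Vert$ (the paper phrases this via the Hardy operators $H_{+}\nu+H_{-}\nu$, which is the same quantity), and the lower bound comes from testing against point masses $\delta_t$ and letting $t\to+\infty$. Your use of monotone convergence in place of the paper's truncation to $[0,a]$ is a cosmetic difference only.
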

	\begin{proof}
Clearly,  
\begin{equation*}
				H_{+} \nu(x) + H_{-} \nu(x) \le x \,  \Vert\nu\Vert, \quad x>0.  
		\end{equation*}
		 Hence, 
		\begin{equation*}
					\Vert  H_{+}\nu +H_{-} \nu\Vert_{L^{q}(\sigma)} \le \Big(  \int_0^{+\infty}  x^q  d \sigma(x) \Big)^{\frac{1}{q}} \Vert\nu\Vert, 
		\end{equation*} 
		which proves \eqref{hardy}, and hence \eqref{one-weight},  with $\varkappa = \Big(  \int_0^{+\infty}  x^q  d \sigma(x) \Big)^{\frac{1}{q}}$.

		Conversely, suppose that \eqref{hardy} holds. 
	Then, for every $a>0$, and $\nu \in \mathcal{M}^+(\R_{+})$, 
		\begin{align*}
		& \Big( \int_{0}^{a} x^q  d \sigma(x)\Big) \Big( \int_a^{+\infty} d \nu(y)\Big)^q\\ &
				\le  \int_{0}^{a} \Big(x  \int_x^{+\infty} d \nu(y)\Big)^q d \sigma(x) 
\\& \le \int_{0}^{+\infty} (H_{-} \nu)^q  d \sigma \le \varkappa^q \Vert\nu\Vert^q. 
				\end{align*}
For $\nu=\delta_{x_0}$ with $x_0>a$, we get 
	\begin{equation*}
\int_{0}^{a} x^q  d \sigma(x) \le \varkappa^q. 
\end{equation*} 
Letting $a \to +\infty$, we deduce \eqref{hardy-cond}. \end{proof} 
			
			Clearly, the Green kernel $G(x,y)=\min(x, y)$ is symmetric, and satisfies the strong maximum 
			principle. Hence, by Theorem \ref{strong-thm}, equations \eqref{one-dim} and \eqref{one-dim-green} 
			have a non-trivial (super)solution $u\in L^q(\R_{+}, \sigma)$ if and only if \eqref{hardy-cond} 
			holds. 
			
			From Proposition~\ref{one-thm}, we deduce  that, for  ``localized'' measures  
			$d \sigma_a = \chi_{(a, +\infty)} d \sigma$ ($a>0$), we have 
			\begin{equation}\label{restr}
\varkappa(\sigma_a) = \Big(\int_{a}^{+\infty} x^q  d \sigma(x) \Big)^{\frac{1}{q}}. 
\end{equation} 

Using this observation and the localization method developed in \cite{CV2},  we obtain the following existence theorem for general weak solutions to \eqref{pde}, 
along with sharp pointwise estimates of solutions. 

We introduce a new potential 
	\begin{equation}
		\label{nonlin}
			\mathbf{K} \sigma (x) \defeq 
			x 	\Big ( \int_{x}^{+\infty} y^q  d \sigma(y)\Big)^{\frac{1}{1-q}}, \quad x>0.  			\end{equation}
			We observe that $\mathbf{K} \sigma $ is a one-dimensional analogue of the potential introduced 
			recently in \cite{CV2} in the framework of  intrinsic Wolff potentials in $\R^n$ (see also \cite{CV3} in the radial case). Matching upper and 
			lower pointwise bounds  of solutions are  obtained below by combining $\mathbf{G} \sigma$ 
			with $\mathbf{K} \sigma$.

	\begin{theorem}\label{one-pde-eq}
		Let $\sigma \in \mathcal{M}^{+}(\R_{+})$, and let $0<q<1$.  
		Then equation \eqref{pde}, or equivalently \eqref{one-dim} has a nontrivial solution 
		if and only if, for every $a>0$,  
				\begin{equation}
		\label{weak-cond}
				\int_0^a  x \, d \sigma(x) + \int_a^{+\infty} x^q d \sigma(x)< +\infty.
				\end{equation}
				Moreover, if \eqref{weak-cond} holds, then there exists a positive solution $u$ to 
				\eqref{pde} such that 
						\begin{align}
				\label{pointwise}
				C^{-1} &  \Big [ \Big(\int_0^x y \, d \sigma(y)\Big)^{\frac{1}{1-q}}  +  \mathbf{K} \sigma (x)\Big]
				\\ & \le u(x) \le C \, \Big [ \Big(\int_0^x y \, d \sigma(y)\Big)^{\frac{1}{1-q}}  +  \mathbf{K} \sigma (x)\Big].
								\end{align}
			The lower bound in \eqref{pointwise} holds for any non-trivial supersolution $u$. 	\end{theorem}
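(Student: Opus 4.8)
The plan is to work throughout with the integral form \eqref{one-dim-green} and the two building blocks $F(x)=\int_0^x y\,d\sigma(y)$ and $g(x)=\int_x^{+\infty} y^q\,d\sigma(y)$, so that the asserted estimate \eqref{pointwise} reads $u(x)\asymp F(x)^{\frac{1}{1-q}} + \mathbf{K}\sigma(x)$ with $\mathbf{K}\sigma(x)=x\,g(x)^{\frac{1}{1-q}}$ by \eqref{nonlin}. I would first record the elementary structural facts that a nontrivial solution (or supersolution) is increasing and concave with $u(0)=0$, that $u'(x)=\int_x^{+\infty} u^q\,d\sigma$ by \eqref{one-dir}, and hence $u(x)=\int_0^x u'(t)\,dt\ge x\,u'(x)$ since $u'$ is decreasing. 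These facts drive the entire argument.

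For the lower bound, which is claimed for any nontrivial supersolution, I would run two self-improving Stieltjes inequalities. Dropping the tail in \eqref{one-dim-green} gives $u(x)\ge \int_0^x u(y)^q\,dF(y)$ via $dF=y\,d\sigma$; setting $\Phi(x)=\int_0^x u^q\,dF$ one has $u\ge\Phi$, whence $d\Phi=u^q\,dF\ge\Phi^q\,dF$, and integrating $\Phi^{-q}\,d\Phi\ge dF$ yields $u(x)\ge\Phi(x)\ge\bigl((1-q)F(x)\bigr)^{\frac{1}{1-q}}$. Symmetrically, from $u(y)^q\ge y^q u'(y)^q$ and $u'(x)=\int_x^{+\infty} u^q\,d\sigma\ge\int_x^{+\infty} y^q u'(y)^q\,d\sigma(y)$, setting $\Psi(x)=\int_x^{+\infty} y^q (u')^q\,d\sigma$ gives $-\Psi^{-q}\,d\Psi\ge y^q\,d\sigma$, which integrates to $u'(x)\ge\bigl((1-q)g(x)\bigr)^{\frac{1}{1-q}}$ and hence $u(x)\ge xu'(x)\ge(1-q)^{\frac{1}{1-q}}\mathbf{K}\sigma(x)$. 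Adding the two gives the lower half of \eqref{pointwise}. Necessity of \eqref{weak-cond} is then immediate, since these same inequalities show a finite nontrivial supersolution can exist only if $F(x)<+\infty$ and $g(x)<+\infty$ for every $x>0$.

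For sufficiency and the upper bound I would use the method of sub- and supersolutions anchored to $w(x)=F(x)^{\frac{1}{1-q}}+\mathbf{K}\sigma(x)$, which is finite for all $x$ precisely under \eqref{weak-cond}. The heart is the two-sided estimate $\mathbf{G}(w^q\,d\sigma)\asymp w$. The lower estimate is the previous computation read backwards: $\int_0^x y\,F(y)^{\frac{q}{1-q}}\,d\sigma=(1-q)F(x)^{\frac{1}{1-q}}$ and $x\int_x^{+\infty} y^q g(y)^{\frac{q}{1-q}}\,d\sigma=(1-q)\mathbf{K}\sigma(x)$, both via $dF=y\,d\sigma$, $-dg=y^q\,d\sigma$ and $\int h^{\frac{q}{1-q}}\,dh=(1-q)h^{\frac{1}{1-q}}$. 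Granting the matching upper estimate $\mathbf{G}(w^q\,d\sigma)\le c_2 w$, homogeneity finishes the argument: since $\mathbf{G}((\lambda w)^q\,d\sigma)=\lambda^q\,\mathbf{G}(w^q\,d\sigma)$, the function $\lambda w$ is a supersolution once $\lambda^{1-q}\ge c_2$ and a subsolution once $\lambda^{1-q}\le c_1$, so monotone iteration between $\underline u=c_1^{\frac{1}{1-q}}w$ and $\overline u=c_2^{\frac{1}{1-q}}w$ converges to a solution trapped between them, giving both existence and the upper half of \eqref{pointwise}. To produce genuine (merely $L^q_{\rm loc}(\sigma)$) solutions rather than globally $L^q(\sigma)$ ones, I would combine this with the localization scheme of \cite{CV2}: by \eqref{restr} the tail bound $\int_a^{+\infty} x^q\,d\sigma<+\infty$ gives $\varkappa(\sigma_a)<+\infty$, hence solutions for the truncations $\sigma_a=\chi_{(a,+\infty)}\,d\sigma$ via Theorem \ref{strong-thm}, while $\int_0^a x\,d\sigma<+\infty$ controls the passage $a\to 0^+$.

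The main obstacle is the upper estimate $\mathbf{G}(w^q\,d\sigma)\le c_2 w$. The diagonal contributions match $w$ cleanly, but the two cross terms $\int_0^x y\,\mathbf{K}\sigma(y)^q\,d\sigma(y)$ and $x\int_x^{+\infty} F(y)^{\frac{q}{1-q}}\,d\sigma(y)$ are not controlled by the crude subadditivity $w^q\le F^{\frac{q}{1-q}}+\mathbf{K}\sigma^q$ alone, since $F$ may be unbounded at infinity while $g$ may blow up at $0$. I expect to handle them by Stieltjes integration by parts together with the monotonicity of $F$ (increasing) and $g$ (decreasing), splitting each integral at the point where the $F$- and $\mathbf{K}\sigma$-parts of $w$ balance so that on each piece one term dominates. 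This is exactly where the interplay between $F(x)=\int_0^x y\,d\sigma$ and $g(x)=\int_x^{+\infty} y^q\,d\sigma$ enters, and where most of the technical work lies.
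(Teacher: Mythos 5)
Your strategy is sound and, for the existence half, coincides with the paper's: both arguments construct the solution by monotone iteration trapped between the subsolution $c_0(\mathbf{G}\sigma)^{\frac{1}{1-q}}$ and the supersolution $c\,\big[\big(\int_0^x y\,d\sigma\big)^{\frac{1}{1-q}}+\mathbf{K}\sigma(x)\big]$, and both leave the verification of the supersolution property (your estimate $\mathbf{G}(w^q\,d\sigma)\le c_2\,w$) as the technical crux --- the paper by citing the analogous computation in \cite{CV3}, you by sketching a splitting argument. For the record, the two cross terms do close without any splitting: for $y>x$ one has $F(y)\le F(x)+y^{1-q}g(x)$, and for $y<x$ one has $g(y)\le g(x)+y^{q-1}F(x)$, after which each cross term reduces, via the weighted arithmetic--geometric mean inequality, to $\big(F^{\frac{1}{1-q}}\big)^{q}\big(\mathbf{K}\sigma\big)^{1-q}$ or $\big(\mathbf{K}\sigma\big)^{q}\big(F^{\frac{1}{1-q}}\big)^{1-q}$, both of which are at most $w$.

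Where you genuinely diverge from the paper is the lower bound. The paper imports both terms from external results: the first from the general estimate \eqref{lower-1} of \cite{GV}, the second from Lemma 4.2 of \cite{CV2} combined with the computation \eqref{restr} of the localized embedding constant. Your two self-improving Stieltjes inequalities prove the same bounds from scratch, with the sharp constant $(1-q)^{\frac{1}{1-q}}$, which is a real gain in self-containedness. One repair is needed, however: the identity $u'(x)=\int_x^{+\infty}u^q\,d\sigma$ from \eqref{one-dir} and the concavity bound $u(y)\ge y\,u'(y)$ are properties of solutions, not of an arbitrary supersolution $u\ge\mathbf{G}(u^q\,d\sigma)$, which need not be concave or even continuous. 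Since the theorem asserts the lower bound for \emph{every} non-trivial supersolution, you should first pass to $v\defeq\mathbf{G}(u^q\,d\sigma)\le u$, which is concave with $v'(x)=\int_x^{+\infty}u^q\,d\sigma\ge\int_x^{+\infty}v^q\,d\sigma$, run both iterations on $v$, and conclude for $u$ from $u\ge v$. (The paper's argument for the second term sidesteps this by using only $u\ge x\int_x^{+\infty}u^q\,d\sigma$, which holds for supersolutions directly.) With these adjustments your proof is complete and somewhat more elementary than the paper's.
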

			
		\begin{remark}
		The lower bound  
		\begin{equation}\label{lower-1} 
		u(x) \ge (1-q)^{\frac{1}{1-q}} \Big [ \mathbf{G} \sigma(x)\Big]^{\frac{1}{1-q}}, \quad x>0, 
			\end{equation}
	is known for a general kernel $G$ which satisfies the strong maximum principle (see \cite{GV}, Theorem 3.3; \cite{QV}), and the constant $(1-q)^{\frac{1}{1-q}}$ here is sharp. However, the second term on the left-hand side of  \eqref{pointwise} makes the lower estimate stronger, so that it matches the upper estimate. 
	\end{remark}

	\begin{proof} The lower bound 
	\begin{equation}\label{est-1} 
		u(x) \ge (1-q)^{\frac{1}{1-q}} \Big[\int_0^x y \, d \sigma(y)\Big]^{\frac{1}{1-q}} , \quad x>0, 
			\end{equation}
			is immediate from \eqref{lower-1}. 
			
		Applying  Lemma 4.2 in \cite{CV2}, with the interval $(a, +\infty)$ in place of a ball $B$, and combining it with \eqref{restr}, for any $a>0$ we have 
		\begin{equation*}	
			\int_a^{+\infty} u(y)^q d \sigma(y) \ge c(q) \varkappa(\sigma_a)^{\frac{q}{1-q}} = c(q) \Big[	\int_a^{+\infty}  y^q d \sigma(y)\Big]^{\frac{1}{1-q}}. 
				\end{equation*}
			
			Hence,
			\begin{equation*}
		u(x) \ge \mathbf{G} (u^q d \sigma) \ge x \int_x^{+\infty} u(y)^q d \sigma(y) \ge c(q) \, x 
		\Big[	\int_x^{+\infty}  y^q d \sigma(y)\Big]^{\frac{1}{1-q}}. 
					\end{equation*}
			
			Combining the preceding estimate with \eqref{est-1}, we obtain the lower bound in \eqref{pointwise} 
			for any non-trivial supersolution $u$. This also proves that \eqref{weak-cond} is necessary 
			for the existence of a non-trivial positive supersolution.

			Conversely, suppose that  \eqref{weak-cond} holds. Let 
			\begin{equation}\label{super-sol} 
		v(x) \defeq c \,  \Big [ \Big(\int_0^x y \, d \sigma(y)\Big)^{\frac{1}{1-q}}  +  \mathbf{K} \sigma (x)\Big], \quad x>0, 
			\end{equation}
			where $c$ is a positive constant. It is not difficult to see that $v$ is a supersolution, so that $v \ge \mathbf{G} (v^q d \sigma)$, 
			if $c=c(q)$ is picked large enough. (See a similar argument in the proof of Theorem 5.1 in \cite{CV3}.)

			Also, it is easy to see that $v_0=c_0 (\mathbf{G} \sigma)^{\frac{1}{1-q}}$ is a subsolution, i.e.,  $v_0 \le \mathbf{G} (v_0^q d \sigma)$, 
			provided $c_0>0$ is a small enough constant. Moreover, we can ensure that  $v_0 \le v$ 
			if $c_0=c_0(q)$ is picked sufficiently small. (See details in \cite{CV3} in the case 
			of radially symmetric solutions in $\R^n$.) Hence, there exists a 
			solution which can be constructed by iterations,   starting from $u_0=v_0$, 		and 
			letting 
		\begin{equation*}	
u_{j+1} = \mathbf{G} (u_j^q d \sigma), \quad j = 0, 1, \ldots . 
	\end{equation*}
Then by induction $u_{j} \le u_{j+1} \le v$, and consequently $u = \lim_{j \to +\infty} u_j$ is a solution to \eqref{one-dim-green} by the Monotone Convergence Theorem. Clearly, $u \le v$, which proves the upper bound in \eqref{pointwise}. 
\end{proof}
		
\subsection{Weak-Type $(1,q)$-Inequality for Integral Operators}
	In this section, we characterize weak-type analogues of $(1,q)$-weighted norm inequalities considered above. We will use some elements of potential theory for general positive kernels $G$, including the notion of \textit{inner capacity}, $\capa(\cdot)$, and the associated \textit{equilibrium} (extremal) measure (see \cite{F}). 
	
	\begin{theorem}\label{weak-thm}
		Let $\sigma \in \mathcal{M}^{+}(\R^n)$, $0<q<1$, and $0\le \alpha <n$.  Suppose $G$ satisfies the weak maximum principle.
		Then the following statements are equivalent:
		\begin{enumerate}
			\item There exists a positive constant $\varkappa_w$ such that $$\Vert  \mathbf{G}\nu \Vert_{L^{q,\infty}(\sigma)} \le \varkappa_w \Vert \nu \Vert \quad \text{\rm for all} \,\,   \nu \in \mathcal{M}^+(\R^n).$$ 
			\item There exists a positive constant $c$ such that 
			$$\sigma(K) \le c \Big (\capa (K)\Big)^q \quad \text{\rm for all compact sets} \,\, K \subset \R^n.$$ 
			\item $\mathbf{G} \sigma \in L^{\frac{q}{1-q}, \infty}(\sigma)$.
		\end{enumerate}
	\end{theorem}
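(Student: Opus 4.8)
The plan is to establish the cycle (1) $\Rightarrow$ (3) $\Rightarrow$ (2) $\Rightarrow$ (1), so that the capacitary condition (2) serves as the bridge between the weak-type inequality (1) and the self-testing condition (3). Two tools will be used repeatedly: the equilibrium measure $\mu^K$ of a compact set $K$ (with $\mathbf{G}\mu^K \le 1$ on $S(\mu^K)$ and $\mu^K(K)=\capa(K)$, see \cite{F}, \cite{L}), and the weak maximum principle, which upgrades bounds on supports to global bounds. Throughout, I would approximate $\sigma$ by its restrictions $\sigma_j = \sigma|_{K_j}$ to a compact exhaustion $K_j \uparrow \R^n$, so that all potentials and distribution functions in sight are finite, and pass to the limit by monotone convergence at the end.

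For (2) $\Rightarrow$ (1) I would first prove the capacitary estimate of level sets: for every $\nu \in \mathcal{M}^{+}(\R^n)$ and $t>0$,
$$\capa(\{\mathbf{G}\nu \ge t\}) \le \frac{a\,h}{t}\,\Vert\nu\Vert.$$
Indeed, for compact $K \subset \{\mathbf{G}\nu \ge t\}$ the equilibrium measure gives $\capa(K)=\mu^K(K) \le t^{-1}\int_K \mathbf{G}\nu\,d\mu^K = t^{-1}\int \mathbf{G}^{*}\mu^K\,d\nu$; since $\mathbf{G}\mu^K \le 1$ on $S(\mu^K)$, the weak maximum principle yields $\mathbf{G}\mu^K \le h$ everywhere, and quasi-symmetry \eqref{q-symm} gives $\mathbf{G}^{*}\mu^K \le a\,\mathbf{G}\mu^K \le a h$, whence the claim after taking the inner supremum over $K$. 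For compact $K'\subset\{\mathbf{G}\nu>t\}$ condition (2) and monotonicity of $\capa$ then give $\sigma(K') \le c\,(ah\,\Vert\nu\Vert/t)^q$, and inner regularity of $\sigma$ produces exactly (1) with $\varkappa_w = c^{1/q}ah$.

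The analytic heart is (1) $\Rightarrow$ (3). Fix $j$ and write $\phi(t)=\sigma(\{\mathbf{G}\sigma_j > t\})$, which is finite by (1) applied to $\nu=\sigma_j$. Splitting $\sigma_j = \sigma_j|_{E_s} + \sigma_j|_{E_s^c}$ at the open set $E_s=\{\mathbf{G}\sigma_j > s\}$, one has $\mathbf{G}(\sigma_j|_{E_s^c}) \le s$ on its closed support, so the weak maximum principle forces $\mathbf{G}(\sigma_j|_{E_s^c}) \le h s$ everywhere. Consequently $\{\mathbf{G}\sigma_j > 2hs\} \subset \{\mathbf{G}(\sigma_j|_{E_s}) > hs\}$, and applying (1) to $\nu = \sigma_j|_{E_s}$ (whose mass is at most $\phi(s)$) yields the self-improving functional inequality
$$\phi(2hs) \le \Big(\frac{\varkappa_w\,\phi(s)}{h s}\Big)^{q}, \qquad s>0.$$
Setting $\psi(s)=s^{q/(1-q)}\phi(s)$ turns this into $\psi(2hs)\le C\,\psi(s)^{q}$ with $C=C(\varkappa_w,q,h)$; since $0<q<1$, iteration drives $\psi$ below its fixed point and gives $\phi(s)\le C_{*}\,s^{-q/(1-q)}$ with $C_{*}$ independent of $j$. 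Letting $j\to\infty$ produces $\sigma(\{\mathbf{G}\sigma > s\}) \le C_{*}\,s^{-q/(1-q)}$, i.e. $\mathbf{G}\sigma \in L^{q/(1-q),\infty}(\sigma)$. I expect this to be the main obstacle: the decisive point is that the weak maximum principle, rather than any Calder\'on--Zygmund structure, is what controls the far part $\mathbf{G}(\sigma_j|_{E_s^c})$ globally and thereby replaces the usual good-$\lambda$ machinery, and one must still justify the self-improvement of the functional inequality and the uniformity of $C_{*}$ in the exhaustion.

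Finally, for (3) $\Rightarrow$ (2) I would argue by truncation. Given compact $K$ with $\sigma(K)>0$, choose $\Lambda=(2C/\sigma(K))^{(1-q)/q}$, where $C$ is the constant in (3); then $K'=\{x\in K:\mathbf{G}(\sigma|_K)(x)\le\Lambda\}$ is compact with $\sigma(K')\ge \tfrac12\sigma(K)$, since (3) bounds the complementary super-level set by $\sigma(K)/2$. On $S(\sigma|_{K'})\subset K'$ one has $\mathbf{G}(\sigma|_{K'})\le\Lambda$, so by the weak maximum principle $\sigma|_{K'}/(h\Lambda)$ is admissible for $\capa(K')$, giving $\capa(K)\ge\capa(K')\ge\sigma(K')/(h\Lambda)$. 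Substituting the value of $\Lambda$ and using $1+(1-q)/q=1/q$ yields $\sigma(K)\le c\,\capa(K)^{q}$, which is (2) and closes the cycle.
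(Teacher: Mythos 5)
Your cycle runs $(1)\Rightarrow(3)\Rightarrow(2)\Rightarrow(1)$, whereas the paper proves $(1)\Rightarrow(2)\Rightarrow(3)\Rightarrow(1)$, and the two routes are genuinely different at both ends. Your $(1)\Rightarrow(3)$ replaces the paper's capacity-based argument with a self-contained bootstrap: the level-set splitting $\sigma_j=\sigma_j|_{E_s}+\sigma_j|_{E_s^c}$ together with the weak maximum principle (giving $\mathbf{G}(\sigma_j|_{E_s^c})\le hs$ globally, hence $\{\mathbf{G}\sigma_j>2hs\}\subset\{\mathbf{G}(\sigma_j|_{E_s})>hs\}$) is exactly the device the paper uses inside its $(2)\Rightarrow(3)$ step, but you feed it back into hypothesis $(1)$ rather than into the capacitary condition, and then close with the fixed-point inequality $M\le CM^q$. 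This is correct; the only point you rightly flag and should spell out is the a priori finiteness of $M_j=\sup_s s^{q/(1-q)}\sigma(\{\mathbf{G}\sigma_j>s\})$, which follows from the crude bound $\sigma(\{\mathbf{G}\sigma_j>s\})\le(\varkappa_w\Vert\sigma_j\Vert/s)^q$ for $s$ in a fixed dyadic block plus downward iteration of the functional inequality; once $M_j<\infty$, taking the supremum in $\psi(2hs)\le C\psi(s)^q$ gives $M_j\le C^{1/(1-q)}$ uniformly in $j$. Your $(2)\Rightarrow(1)$ is the classical Maz'ya-type capacitary weak-type estimate $\capa(\{\mathbf{G}\nu\ge t\})\le ah\Vert\nu\Vert/t$, and your $(3)\Rightarrow(2)$ by truncation at the median level $\Lambda$ is an implication the paper never proves directly; for that step you should note that the admissibility claim $\capa(K')\ge\Vert\sigma|_{K'}\Vert/(h\Lambda)$ is the de la Vall\'ee Poussin half of Fuglede's inner capacity (dual to the equilibrium-measure properties \eqref{cap} that both you and the paper use), and for merely quasi-symmetric kernels it may cost an extra factor $a$, which is harmless. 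The trade-off: the paper's $(3)\Rightarrow(1)$ is two lines of weak-type H\"older but imports the nontrivial sublinear Schur-type bound $\Vert\mathbf{G}\nu/\mathbf{G}\sigma\Vert_{L^{1,\infty}(\sigma)}\le C\Vert\nu\Vert$ from \cite{QV}, and its $(1)\Rightarrow(2)$ needs the absolute continuity of $\sigma$ with respect to capacity; your route avoids both of these external inputs at the price of the bootstrap bookkeeping, and is arguably more self-contained.
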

	\begin{proof}
		(1) $\Rightarrow$ (2)  Without loss of generality we may assume that the kernel $G$ is \textit{strictly positive}, that is, $G(x, x)>0$ for all $x \in \Omega$. Otherwise, 
		we can consider the kernel $G$ on the set $\Omega\setminus A$, where $A\defeq \{x \in \Omega\colon 
		G(x, x) \not=0\}$, since $A$ is negligible for the corresponding $(1,q)$-inequality in statement (1). (See details in \cite{QV} in the case of the corresponding strong-type inequalities.) 
		
		We remark that the kernel $G$ is known to be strictly positive if and only if, for any compact set $K \subset \Omega$, 
		the inner capacity ${\rm cap}(K)$ is finite (\cite{F}). In this case there exists an equilibrium measure $\lambda$  on $K$ such that 
		\begin{equation}
		\label{cap}
		\mathbf{G} \lambda \ge 1 \, \, \text{{\rm n.e.}} \,\, \text{{\rm on}}\,\, K, \quad \mathbf{G} \lambda \le 1 \, \,  \text{{\rm on}} \, \,  S(\lambda), \quad 
		\Vert\lambda\Vert={\rm cap}(K).
			\end{equation}
		Here n.e. stands for \textit{nearly everywhere}, which  means that the inequality holds on a given set except for a subset 
		of zero capacity \cite{F}. 
		
		Next, we remark that condition (1) yields that $\sigma$ is absolutely continuous with respect to capacity, i.e., $\sigma(K)=0$ if ${\rm cap}(K)=0$. 
		 (See a similar argument in \cite{QV} in the case of strong-type inequalities.) Consequently, 
		 $\mathbf{G} \lambda \ge 1$ $d \sigma$-a.e. on $K$. 
		 Hence,  by applying condition (1) with $\nu = \lambda$, we obtain (2).
		
		(2) $\Rightarrow$ (3) We denote by $\sigma_E$ denotes the restriction of $\sigma$ to a Borel set $E \subset \Omega$. 
		Without loss of generality we may assume that $\sigma$ is a finite measure on $\Omega$. Otherwise 
		we can replace $\sigma$ with $\sigma_F$ where  $F$ is a compact subset of $\Omega$. 
		We then deduce the estimate
		$$
		\Vert \mathbf{G} \sigma_F \Vert_{L^{\frac{q}{1-q}, \infty}(\sigma_F)} \le C< \infty,
		$$
		where $C$ does not depend on $F$, and use the exhaustion of $\Omega$ by an increasing sequence of compact subsets $F_n \uparrow \Omega$ to conclude that $ \mathbf{G}\sigma \in L^{\frac{q}{1-q}, \infty}(\sigma)$ 
		by the  Monotone Convergence Theorem.

		Set $E_t \defeq \{ x \in \Omega\colon \, \mathbf{G}\sigma (x)> t \}$, where $t>0$. 			Notice that, for all 
		$x \in (E_t)^c$,  
		$$
		\mathbf{G} \sigma_{(E_t)^c} (x) \le \mathbf{G} \sigma (x) \le t. 
		$$
		The set $(E_t)^c$ is closed, and hence  the preceding inequality holds on $S( \sigma_{(E_t)^c})$. It follows  by the weak maximum principle that, for all $x \in \Omega$, 
		$$
		\mathbf{G} \sigma_{(E_t)^c} (x) \le \mathbf{G} \sigma (x) \le h \,  t. 
		$$
	Hence, 
		\begin{equation}	\label{eq-h}
		\{x \in \Omega \colon \mathbf{G} \sigma (x) > (h+1) t 
		\} \subset \{x \in \Omega \colon \mathbf{G} \sigma_{E_t} (x) > t\}. 
		\end{equation} 
		
		Denote by $K\subset \Omega$ a compact subset of $\{x \in \Omega\colon \mathbf{G} \sigma_{E_t} (x) > t\}$. By (2), we have 
			\[ \sigma (K) \le c \, \Big(\capa (K)\Big)^q \]
		If $\lambda$ is the equilibrium measure on $K$, then $\mathbf{G} \lambda \le 1$ on $S(\lambda)$, and 
		$\lambda(K) = \capa(K)$  
		by \eqref{cap}. 
		Hence by the weak maximum principle $\mathbf{G} \lambda \le h$ on $\Omega$. 
		Using quasi-symmetry of the kernel $G$ and Fubini's theorem, we have
		\begin{align*}
			\capa (K) 
				&= \int_{K} \, d\lambda \\
				&\le \frac{1}{t} \int_{K} \mathbf{G} \sigma_{E_t} \, d\lambda \\
				&\le \frac{a}{t} \int_{E_t} \mathbf{G}\lambda d \sigma\\
				&\le \frac{a h}{t} \sigma(E_t).
		\end{align*}
		This shows that
		\begin{equation*}
			\sigma(K) \le \frac{c (a h)^q} {t^q} \, \Big (\sigma(E_t)\Big)^q.
		\end{equation*}
		Taking the supremum over all $K \subset E_t$, we deduce 
		\begin{equation*}
		\Big(\sigma(E_t) \Big)^{1-q} \le \frac{c (a h)^{q}} {t^q}. 
		\end{equation*}
		It follows from \eqref{eq-h} that, for all $t>0$,  
			\begin{equation*}
			t^{\frac{q}{1-q}} \sigma \Big ( \Omega \colon \mathbf{G} \sigma (x) > (h+1) t  \Big) \le 
			t^{\frac{q}{1-q}} \sigma(E_t) \le 
			c^{\frac{1}{1-q}} (a h)^{\frac{q}{1-q}}. 
				\end{equation*}
		Thus, (3) holds.
		
		(3) $\Rightarrow$ (2) 
			By H\"older's inequality for weak $L^q$ spaces, we have
			\begin{align*}
				\Vert \mathbf{G}\nu \Vert_{L^{q,\infty}(\sigma)}
					&= \left\Vert \frac{\mathbf{G}\nu}{\mathbf{G}\sigma} \mathbf{G}\sigma \right\Vert_{L^{q, \infty}(\sigma)} \\
					&\le \left\Vert \frac{\mathbf{G}\nu}{\mathbf{G}\sigma}  \right\Vert_{L^{1, \infty}(\sigma)} \left\Vert \mathbf{G}\sigma \right\Vert_{L^{\frac{q}{1-q}, \infty}(\sigma)} \\
					&\le C \left\Vert \mathbf{G}\sigma \right\Vert_{L^{\frac{q}{1-q}, \infty}(\sigma)} \Vert \nu \Vert,
			\end{align*}
			where the final inequality, 		$$
		\left\Vert \frac{\mathbf{G}\nu}{\mathbf{G}\sigma}  \right\Vert_{L^{1, \infty}(\sigma)} \le C \, 
		\Vert \nu \Vert, 
		$$	
			with a constant $C=C(h, a)$, was obtained  in \cite{QV}, for quasi-symmetric kernels $G$ satisfying the weak maximum principle.  
\end{proof}

\section{Fractional Maximal Operators}
	We denote by  $\mathcal{M}^+(\R^n)$ the class of positive locally finite Borel measures on $\R^n$. For $\nu \in \mathcal{M}^+(\R^n)$, we set $\Vert\nu\Vert=\nu(\R^n)$. 
	
	Let $\nu \in \mathcal{M}^+(\R^n)$, and let $0 \le \alpha < n$. 
	We define the fractional maximal operator $M_\alpha$ by
		\begin{equation}
		\label{frac-max}
 M_\alpha \nu (x) \defeq \sup_{Q \ni x} \frac{|Q|_\nu}{|Q|^{1-\frac{\alpha}{n}}}, \quad x \in \R^n,
 	\end{equation}
		where $Q$ is a cube, $|Q|_\nu \defeq \nu(Q)$, and $|Q|$ is the Lebesgue measure of $Q$. 
		If $f \in L^1_{\rm{loc}} (\R^n, d \mu)$ where $\mu \in \mathcal{M}^+(\R^n)$, 
		we set $M_\alpha (f d \mu)= M_\alpha \nu$ where $d \nu=|f| d\mu$, i.e., 
		\begin{equation}
		\label{frac-max-mu}
 M_\alpha (f d \mu) (x) \defeq \sup_{Q \ni x} \frac{1}{|Q|^{1-\frac{\alpha}{n}}} \int_Q |f| \, d\mu, \quad x \in \R^n. 
 	\end{equation}
		
		For $\sigma \in  \mathcal{M}^+(\R^n)$, it was shown in \cite{V1}  
		that in the case   $0<q<p$,  
				\begin{equation}
		\label{m-frac}
		M_\alpha\colon L^p(dx)\to L^q(d \sigma) \Longleftrightarrow M_\alpha \sigma \in L^{\frac{q}{p-q}} ( d \sigma),  
		 	\end{equation}
			\begin{equation}
		\label{m-frac-weak}
		M_\alpha\colon L^p(dx)\to L^{q, \infty} (d \sigma) \Longleftrightarrow M_\alpha \sigma \in L^{\frac{q}{p-q}, \infty} ( d \sigma),  
		 	\end{equation}
provided $p>1$.

	 More general two-weight maximal inequalities  
	 \begin{equation}
		\label{two-weight}
			\Vert M_\alpha (f  d \mu)\Vert_{L^{q} (\sigma)} \le \varkappa \, \Vert f \Vert_{L^p(\mu)}, \quad \text{for all} \,\,  f \in L^p(\mu), 
		\end{equation}
	 where characterized by E. T. Sawyer \cite{S} in the case $p=q>1$, R. L. Wheeden \cite{W} in the 
	 case $q>p>1$, and the second author \cite{V1}  in the case $0<q<p$ and $p>1$, 		along with their weak-type counterparts, 
		 \begin{equation}
		\label{two-weight-weak}
			\Vert M_\alpha (f  d \mu)\Vert_{L^{q, \infty} (\sigma)} \le \varkappa_w \, \Vert f \Vert_{L^p(\mu)}, \quad \text{for all} \,\,  f \in L^p(\mu),  
\end{equation}
where $\sigma, \mu  \in  \mathcal{M}^+(\R^n)$, and 
$\varkappa, \varkappa_w$ are positive constants which do not  dependent on $f$. 

 However, some of the methods 
		 used in \cite{V1} for $0<q<p$ and $p>1$ are not directly applicable in the case $p=1$, although there are analogues of these results for real Hardy spaces, i.e., when the norm 
		$ \Vert f \Vert_{L^p(\mu)}$ on the right-hand side of \eqref{two-weight} or \eqref{two-weight-weak}  is replaced with $ \Vert M_\mu f \Vert_{L^p(\mu)}$, where  
		 \begin{equation}
		\label{mu-max}
 M_\mu f (x) \defeq \sup_{Q \ni x} \frac{1}{|Q|_\mu} \int_Q |f| d \mu. 
 	\end{equation}

	We would like to understand similar problems in the case $0<q<1$ and $p=1$, in particular, 
	when $M_\alpha\colon \mathcal{M}^+(\R^n) \to L^q(d \sigma)$, or equivalently,  
	there exists a constant $\varkappa>0$ such that the inequality
		\begin{equation}
		\label{main_frac_inequality}
			\Vert M_\alpha \nu \Vert_{L^q(\sigma)} \le \varkappa \, \Vert\nu\Vert  
\end{equation}
		holds for all $\nu \in \mathcal{M}^+(\R^n)$.
		
	In the case $\alpha = 0$, Rozin \cite{R} showed that the condition 
		\[ \sigma \in L^{\frac{1}{1-q}, 1} (\R^n, dx)\]
	is sufficient for the Hardy-Littlewood operator $M=M_0\colon L^1(dx) \to L^q(\sigma)$ to be bounded; moreover, 
	when $\sigma$ is radially symmetric and  decreasing, this is also a necessary condition.
	We will generalize this result and provide necessary and sufficient conditions for the range $0 \le \alpha < n$. We also obtain analogous results 
	for the weak-type inequality 
	\begin{equation}
		\label{weak_inequality}
			\Vert M_\alpha \nu \Vert_{L^{q, \infty} (\sigma)} \le \varkappa_w \, \Vert\nu\Vert, \quad \text{for all} \,\,   \nu \in \mathcal{M}^+(\R^n), 
		\end{equation}
		 where $\varkappa_w$ is a positive constant which does not depend on $\nu$. 
		 
	We  treat more general maximal operators as well, in particular, dyadic maximal operators 
	\begin{equation}
		\label{dyadic-max}
			M_\rho \nu (x) \defeq  \sup_{Q \in \mathcal{Q}\colon  Q \ni x} \rho_Q \, {|Q|_\nu}, 
		\end{equation}
where $\mathcal{Q}$ is the family of all dyadic cubes in $\R^n$, and $\{\rho_Q \}_{Q \in \mathcal{Q}}$ is a fixed sequence of nonnegative reals associated 
with $Q \in  \mathcal{Q}$. The corresponding  weak-type maximal inequality is given by 
	\begin{equation}
		\label{dyadic-ineq}
			\Vert M_\rho \nu  \Vert_{L^{q, \infty} (\sigma)} \le \varkappa_w \, \Vert\nu \Vert, \quad  \text{for all} \,\,    \nu\in \mathcal{M}^+(\R^n).  
		\end{equation}

\subsection{Strong-Type Inequality}
	\begin{theorem} 	Let $\sigma \in M^{+}(\R^n)$, $0<q<1$, and $0\le \alpha <n$.  
		The inequality (\ref{main_frac_inequality}) holds if and only if there exists a function $u\not\equiv 0$ such that  
		$$u \in L^q(\sigma), \quad \text{{\rm and}} \quad  u \ge M_\alpha(u^q \sigma).$$ 
		
		Moreover, $u$ can be constructed as follows: $u = \lim_{j \to \infty} u_j$, where $u_0 \defeq (M_\alpha \sigma)^{\frac{1}{1-q}}$, $u_{j+1} \ge u_j$, and 
		\begin{equation}
		\label{u-j}
 u_{j+1} \defeq M_\alpha (u_j^q \sigma), \quad j=0, 1, \ldots . 
 		\end{equation}
		In particular, $u \ge (M_\alpha \sigma)^{\frac{1}{1-q}}$. 
	\end{theorem}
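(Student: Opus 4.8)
The plan is to prove the two implications separately, with the existence of a supersolution as the pivot, and to read off the iteration in the ``moreover'' part as a byproduct of the forward implication.

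For the sufficiency direction (a supersolution $u$ yields the inequality), I would first extract the local consequence of $u \ge M_\alpha(u^q\sigma)$: for every cube $Q$ and every $x\in Q$ one has $u(x)\ge\frac{1}{|Q|^{1-\frac{\alpha}{n}}}\int_Q u^q\,d\sigma$, hence $\int_Q u^q\,d\sigma\le(\inf_Q u)\,|Q|^{1-\frac{\alpha}{n}}$. Setting $d\tau=u^q\,d\sigma$, which is a \emph{finite} measure because $u\in L^q(\sigma)$, this rearranges to $\frac{1}{|Q|^{1-\frac{\alpha}{n}}}\le\frac{\inf_Q u}{\tau(Q)}$, so for any $\nu\in\mathcal{M}^+(\R^n)$ and any $x\in Q$ we get $\frac{\nu(Q)}{|Q|^{1-\frac{\alpha}{n}}}\le u(x)\,\frac{\nu(Q)}{\tau(Q)}$. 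Taking the supremum over cubes $Q\ni x$ gives the pointwise domination $M_\alpha\nu(x)\le u(x)\,M^\tau\nu(x)$, where $M^\tau\nu(x)=\sup_{Q\ni x}\frac{\nu(Q)}{\tau(Q)}$ is the Hardy--Littlewood maximal function relative to $\tau$. Consequently $\int (M_\alpha\nu)^q\,d\sigma\le\int u^q (M^\tau\nu)^q\,d\sigma=\int (M^\tau\nu)^q\,d\tau$. Since $\tau$ is a finite Radon measure, $M^\tau$ satisfies the weak-type $(1,1)$ bound $\tau(M^\tau\nu>t)\le C_n t^{-1}\|\nu\|$ by the Besicovitch covering lemma, and a layer-cake estimate for $0<q<1$ against a finite measure yields $\int (M^\tau\nu)^q\,d\tau\le C\,\|\nu\|^q\,\tau(\R^n)^{1-q}=C\,\|\nu\|^q\,\|u\|_{L^q(\sigma)}^{q(1-q)}$, which is exactly \eqref{main_frac_inequality}.

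For the converse I would run the stated iteration, but first secure the one genuine a priori bound, namely $M_\alpha\sigma\in L^{\frac{q}{1-q}}(\sigma)$, equivalently $u_0=(M_\alpha\sigma)^{\frac{1}{1-q}}\in L^q(\sigma)$. I would obtain this by self-improvement: test \eqref{main_frac_inequality} on $d\nu=\min(M_\alpha\sigma,m)^{\frac{q}{1-q}}\,d\sigma$, truncated to a ball and to cubes inside it so that $\nu$ is finite. A near-optimal cube $Q\ni x$ for $M_\alpha\sigma(x)$ satisfies $M_\alpha\sigma(y)\ge(1-\varepsilon)M_\alpha\sigma(x)$ for $y\in Q$, which gives the lower bound $M_\alpha\nu(x)\ge\min(M_\alpha\sigma(x),m)^{\frac{1}{1-q}}$; feeding this into \eqref{main_frac_inequality} produces $I\le C\varkappa^q I^q$ with $I=\int\min(M_\alpha\sigma,m)^{\frac{q}{1-q}}\,d\sigma$, hence $I\le C\varkappa^{\frac{q}{1-q}}$ uniformly, and letting the truncations tend to infinity gives $\int(M_\alpha\sigma)^{\frac{q}{1-q}}\,d\sigma\le C\varkappa^{\frac{q}{1-q}}<\infty$.

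With finiteness in hand the iteration converges. The same near-optimal-cube estimate shows $u_1=M_\alpha(u_0^q\sigma)\ge(M_\alpha\sigma)^{\frac{1}{1-q}}=u_0$, and by monotonicity of $v\mapsto M_\alpha(v^q\sigma)$ the sequence $u_{j+1}=M_\alpha(u_j^q\sigma)$ is non-decreasing. Testing \eqref{main_frac_inequality} on $d\nu=u_j^q\,d\sigma$ gives the recursion $A_{j+1}\le\varkappa A_j^q$ with $A_j=\|u_j\|_{L^q(\sigma)}$; together with $A_0\le A_1$ this forces $A_0\le\varkappa^{\frac{1}{1-q}}$ and then inductively $A_j\le\varkappa^{\frac{1}{1-q}}$ for all $j$. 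Hence $u=\lim_j u_j=\sup_j u_j\in L^q(\sigma)$ by monotone convergence, with $u\ge u_0\not\equiv0$; and since the supremum over $j$ commutes with the supremum over cubes defining $M_\alpha$, one gets $u=M_\alpha(u^q\sigma)$, so $u$ is in particular a supersolution. I expect the main obstacle to be the self-improvement step establishing $M_\alpha\sigma\in L^{\frac{q}{1-q}}(\sigma)$: without this a priori integrability the monotone iterates, though increasing, would have no reason to stay in $L^q(\sigma)$, and the construction would collapse. The delicate point inside it is matching the two truncations (the cap $m$ and the ball) so that the near-optimal cube used in the lower bound for $M_\alpha\nu$ is admissible for the truncated measure.
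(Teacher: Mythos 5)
Your overall architecture coincides with the paper's: the sufficiency direction rests on the pointwise factorization $M_\alpha\nu(x)\le u(x)\,M^\tau\nu(x)$ with $d\tau=u^q\,d\sigma$, followed by the weak-type $(1,1)$ bound for the $\tau$-relative maximal function and the embedding of $L^{1,\infty}(\tau)$ into $L^q(\tau)$ for the finite measure $\tau$; the necessity direction runs the stated iteration and closes the norm recursion $A_{j+1}\le\varkappa A_j^q\le\varkappa A_{j+1}^q$. Your extra preliminary step --- proving $M_\alpha\sigma\in L^{\frac{q}{1-q}}(\sigma)$ by testing the inequality on truncations of $(M_\alpha\sigma)^{\frac{q}{1-q}}\,d\sigma$ --- is not in the paper's proof, but it is in fact needed there too: to divide by $A_{j+1}^q$ in the recursion one must know $A_0=\Vert (M_\alpha\sigma)^{\frac{1}{1-q}}\Vert_{L^q(\sigma)}<\infty$, and the paper leaves this implicit. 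So on that point your version is more careful, provided you do resolve the truncation-matching issue you flag (e.g.\ by restricting the maximal operator to cubes of bounded sidelength and to centers in a smaller ball, then passing to the limit).

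The one genuine defect is in the sufficiency direction: you define $M^\tau\nu(x)=\sup_{Q\ni x}\nu(Q)/\tau(Q)$ as an \emph{uncentered} maximal function relative to the arbitrary finite measure $\tau$ and assert its weak-type $(1,1)$ bound ``by the Besicovitch covering lemma.'' Besicovitch yields weak $(1,1)$ only for the \emph{centered} maximal function relative to a general measure; the uncentered version can fail to be of weak type $(1,1)$ in dimension $n\ge 2$ (Sj\"ogren's example with a Gaussian-type measure in the plane), because $\tau$ need not be doubling. As written, this step is false. The repair is exactly the device the paper uses: since $M_\alpha$ is taken with respect to Lebesgue measure, which is doubling, its centered and uncentered versions are comparable, so one may replace $M_\alpha$ by its centered-ball version from the outset; your derivation of $\tau(B(x,r))\le u(x)\,|B(x,r)|^{1-\frac{\alpha}{n}}$ goes through verbatim with $x$ the center, giving $M_\alpha\nu(x)\le u(x)\,M^\tau_{c}\nu(x)$ with the \emph{centered} relative maximal function $M^\tau_c$, for which Besicovitch does give $\tau(\{M^\tau_c\nu>t\})\le C_n t^{-1}\Vert\nu\Vert$. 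With that substitution the rest of your argument is sound.
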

	\begin{proof}
	($\Rightarrow$) We let $u_0 \defeq (M_\alpha \sigma)^{\frac{1}{1-q}}$. Notice that, for all $x \in Q$,  we have $u_0 (x) \ge \Big(\frac{|Q|_\sigma}{|Q|^{1-\frac{\alpha}{n}}}\Big)^{\frac{1}{1-q}}$. 
	Hence, 
	$$
	u_1(x) \defeq M_\alpha(u_0^q d \sigma)(x) =  \sup_{Q \ni x} \frac{1}{|Q|^{1-\frac{\alpha}{n}}}\int_Q u_0^q d \sigma \ge  \sup_{Q \ni x} \Big(\frac{|Q|_\sigma}{|Q|^{1-\frac{\alpha}{n}}}\Big)^{\frac{1}{1-q}}=u_0(x).
	$$
	By induction, we see that 
	$$u_{j+1}\defeq M_\alpha(u_j^q d \sigma) \ge M_\alpha(u_{j-1}^q d \sigma) = u_j, \quad j=1, 2, \ldots .$$

Let $u = \lim u_j$. By (\ref{main_frac_inequality}), we have
		\begin{align*}
			\Vert u_{j+1} \Vert_{L^q(\sigma)}
				&= \Vert M_\alpha (u^q_j\sigma) \Vert_{L^q(\sigma)} \\
				&\le \varkappa \Vert u_j \Vert_{L^q(\sigma)}^q \\
				&\le \varkappa \Vert u_{j+1} \Vert_{L^q(\sigma)}^q.
		\end{align*}
	From this  we deduce that $\Vert u_{j+1} \Vert_{L^q(\sigma)}^q \le \varkappa^{\frac{1}{1-q}}$ for $j = 0, 1, \ldots$.
	Since the norms $\Vert u_j \Vert_{L^q(\sigma)}^q$ are uniformly bounded, by the Monotone Convergence Theorem, we have for $u \defeq \lim_{j \rightarrow \infty} u_j$ that $u \in L^q(\sigma)$.
	Note that by construction $u = M_\alpha (u^q d \sigma)$.
	
	($\Leftarrow$) We can assume here that $M_\alpha \nu$ is defined, for $\nu \in \mathcal{M}(\R^n)$, 
	as   the centered 
					fractional maximal function,
	$$M_\alpha \nu (x) \defeq \sup_{r>0} \frac{\nu(B(x, r))}{|B(x, r)|^{1-\frac{\alpha}{n}}}, $$
	 since it is equivalent to its uncentered analogue used above.  Suppose that there exists $u \in L^q(\sigma)$ ($u \not\equiv 0$) such that $u \ge M_\alpha(u^q d \sigma)$.
	Set $\omega \defeq u^q d \sigma$. Let $\nu \in \mathcal{M}(\R^n)$. 
	
	We note that we have
	\begin{align*}
		\frac{M_\alpha \nu(x)}{M_\alpha \omega(x)}
			&= \frac{ \sup_{r>0} \frac{|B(x, r)|_\nu}{|B(x, r)|^{1-\frac{\alpha}{n}}}}{ \sup_{\rho>0} \frac{|B(x, \rho)|_\nu}{|B(x, \rho)|^{1-\frac{\alpha}{n}}}} \\
			&\le \sup_{r>0} \frac{|B(x, r)|_\nu}{|B(x, r)|_\omega} \\
			&=: M_\omega \nu(x).
	\end{align*}
	Thus,
	\begin{align*}
		\Vert M_\alpha \nu \Vert_{L^q(\sigma)}
			&= \left\Vert \frac{M_\alpha \nu}{M_\alpha \omega} \right\Vert_{L^q((M_\alpha \omega)^q d \sigma)} \\
			&\le \left\Vert \frac{M_\alpha \nu}{M_\alpha \omega} \right\Vert_{L^q(d\omega)} \\
			&\le \left\Vert M_\omega \nu  \right\Vert_{L^q(d\omega)}\\
						&\le C \left\Vert M_\omega \nu \right\Vert_{L^{1,\infty}(\omega)}\le C \Vert \nu \Vert,
	\end{align*}
	by Jensen's inequality and the $(1, 1)$-weak-type  maximal function inequality for $M_\sigma \nu$. 
	This establishes (\ref{main_frac_inequality}).
\end{proof}

\subsection{Weak-Type Inequality}

For $0\le \alpha<n$, we define the \textit{Hausdorff content} on a set $E \subset \R^n$ to be
		\begin{equation}
			H^{n-\alpha}(E) \defeq \inf \left\{ \sum_{i=1}^\infty r_i^{n-\alpha}\colon E \subset \bigcup_{i=1}^\infty B(x_i, r_i),  \right\}
		\end{equation}
		where the collection of balls $\{B(x_i, r_i)\}$ forms a countable covering of $E$.
	\begin{theorem}
		Let $\sigma \in M^{+}(\R^n)$, $0<q<1$, and $0\le \alpha <n$. Then the following  conditions are equivalent:
		\begin{enumerate}
			\item There exists a positive constant $\varkappa_w$ such that 
			$$\Vert M_\alpha \nu \Vert_{L^{q, \infty}(\sigma)} \le \varkappa_w \, \Vert\nu\Vert 
			\quad 
			\text{{\rm for all}} \, \, \nu \in \mathcal{M}(\R^n).$$
			\item There exists a positive constant $C>0$ such that  
			$$\sigma(E) \le C \, (H^{n - \alpha}(E))^q \quad \text{{\rm for all Borel sets}} 
			 \, \, E \subset \R^n.$$
		\item $M_\alpha \sigma \in L^{\frac{q}{1-q}, \infty}(\sigma)$.
		\end{enumerate}
\end{theorem}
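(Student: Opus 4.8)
The plan is to establish the cycle of implications $(1)\Rightarrow(2)\Rightarrow(3)\Rightarrow(1)$, mirroring the structure of the proof of Theorem~\ref{weak-thm} for integral operators, with the Hausdorff content $H^{n-\alpha}$ now playing the role that the inner capacity $\capa(\cdot)$ played there. Conditions (2) and (3) are the natural ``capacitary'' and ``self-improving'' reformulations of the weak-type bound, and the only genuinely new feature relative to the integral-operator case is that the relevant covering geometry will be supplied by a Vitali-type lemma rather than by equilibrium measures.

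For $(1)\Rightarrow(2)$, fix a Borel set $E$ and an arbitrary covering $E\subset\bigcup_i B(x_i,r_i)$. I would test (1) against the discrete measure $\nu=\sum_i m_i\,\delta_{x_i}$ with masses $m_i=c_n\,r_i^{n-\alpha}$ chosen so that, for every $x\in B(x_i,r_i)$, a cube $Q\ni x$ of side $\sim r_i$ contains $x_i$ and hence $M_\alpha\nu(x)\ge m_i/|Q|^{1-\frac{\alpha}{n}}\ge 1$. Thus $M_\alpha\nu\ge 1$ on $E$, so $E\subset\{M_\alpha\nu>t\}$ for every $t<1$; feeding this into $\Vert M_\alpha\nu\Vert_{L^{q,\infty}(\sigma)}\le\varkappa_w\Vert\nu\Vert$ and letting $t\to 1^-$ gives $\sigma(E)\le(\varkappa_w\Vert\nu\Vert)^q=C\,\bigl(\sum_i r_i^{n-\alpha}\bigr)^q$. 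Taking the infimum over all coverings yields $\sigma(E)\le C\,(H^{n-\alpha}(E))^q$, which is (2).

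The implication $(2)\Rightarrow(3)$ is the crux. Writing $E_t=\{M_\alpha\sigma>t\}$, I would cover $E_t$ by cubes $Q_x\ni x$ with $\sigma(Q_x)>t\,|Q_x|^{1-\frac{\alpha}{n}}$ and apply a Vitali covering lemma to extract a disjoint subfamily $\{Q_j\}$ whose dilates $\{5Q_j\}$ cover $E_t$. The key observation is that $\sigma(Q_j)>t\,|Q_j|^{1-\frac{\alpha}{n}}$ forces $Q_j\subset E_t$, so disjointness gives $\sum_j\sigma(Q_j)=\sigma\bigl(\bigcup_j Q_j\bigr)\le\sigma(E_t)$. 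Since $\ell(Q_j)^{n-\alpha}\sim|Q_j|^{1-\frac{\alpha}{n}}<\sigma(Q_j)/t$, this produces the content estimate $H^{n-\alpha}(E_t)\le C_n\,t^{-1}\sigma(E_t)$. Inserting this into (2) applied to $E=E_t$ gives $\sigma(E_t)\le C\,(C_n t^{-1}\sigma(E_t))^q$, and here the hypothesis $0<q<1$ is essential: it lets me absorb the factor $\sigma(E_t)^q$ and solve for $\sigma(E_t)^{1-q}\le C\,C_n^q\,t^{-q}$, i.e. $t^{q/(1-q)}\sigma(E_t)\le C'$ uniformly in $t$, which is precisely (3). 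As in Theorem~\ref{weak-thm}, I would first reduce to $\sigma$ finite (so that the $Q_x$ have uniformly bounded size and the Vitali lemma applies) and then exhaust $\R^n$ by compact sets, passing to the limit by monotone convergence.

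Finally, for $(3)\Rightarrow(1)$ I would follow the weak-type Hölder argument already used in the strong-type maximal theorem. The pointwise bound $M_\alpha\nu(x)\le M_\sigma\nu(x)\,M_\alpha\sigma(x)$, where $M_\sigma\nu(x)=\sup_{r>0}\nu(B(x,r))/\sigma(B(x,r))$, gives $M_\alpha\nu/M_\alpha\sigma\le M_\sigma\nu$ pointwise, whence by Hölder's inequality in Lorentz spaces (with exponents $1$ and $q/(1-q)$, since $1+\tfrac{1-q}{q}=\tfrac1q$)
\[
\Vert M_\alpha\nu\Vert_{L^{q,\infty}(\sigma)}\le\Bigl\Vert\frac{M_\alpha\nu}{M_\alpha\sigma}\Bigr\Vert_{L^{1,\infty}(\sigma)}\,\Vert M_\alpha\sigma\Vert_{L^{q/(1-q),\infty}(\sigma)}\le\Vert M_\sigma\nu\Vert_{L^{1,\infty}(\sigma)}\,\Vert M_\alpha\sigma\Vert_{L^{q/(1-q),\infty}(\sigma)}.
\]
The first factor is $\le C\Vert\nu\Vert$ by the $(1,1)$ weak-type bound for the Hardy--Littlewood maximal operator $M_\sigma$ (via Besicovitch covering), and the second is finite by (3); together these give (1). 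I expect the covering step in $(2)\Rightarrow(3)$ — in particular verifying $Q_j\subset E_t$ and organizing the reduction to finite $\sigma$ plus exhaustion — to be the main obstacle, the other two implications being essentially soft once the correct test objects ($\nu=\sum_i m_i\delta_{x_i}$ and the factorization through $M_\sigma$) have been identified.
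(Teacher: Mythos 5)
Your proposal is correct, and two of the three implications ((2)$\Rightarrow$(3) and (3)$\Rightarrow$(1)) follow essentially the paper's own argument: the paper also covers $E_t=\{M_\alpha\sigma>t\}$ by cubes witnessing $\sigma(Q_x)>t\,|Q_x|^{1-\frac{\alpha}{n}}$, controls the content by the overlap of the cover (Besicovitch with bounded overlap constant $b_n$, where you use Vitali with disjointness plus dilation --- a cosmetic difference), absorbs $\sigma(E_t)^q$ using $q<1$, and closes the loop via the pointwise factorization $M_\alpha\nu\le (M_\sigma\nu)(M_\alpha\sigma)$, weak-type H\"older, and the $(1,1)$ bound for the centered $M_\sigma$. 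Your handling of the covering step is in fact slightly more careful than the paper's: the containment $Q_j\subset E_t$, which you make explicit, is what justifies the paper's inequality $\sum_i\sigma(Q_i)\le b_n\,\sigma(K)$ (really $\le b_n\,\sigma(E_t)$), and your reduction to finite $\sigma$ plus exhaustion plays the role of the paper's restriction to compact $K\subset E_t$ followed by a supremum. The genuine divergence is in (1)$\Rightarrow$(2): the paper invokes Frostman's theorem to produce, on a compact $K\subset E$ with $H^{n-\alpha}(K)>0$, a single measure $\nu$ with $\Vert\nu\Vert\le H^{n-\alpha}(K)$ and $M_\alpha\nu\ge c$ on $K$, i.e.\ it realizes the content from below by one extremal measure (the exact analogue of the equilibrium measure in Theorem~\ref{weak-thm}); you instead test the weak-type inequality against the discrete measure $\nu=\sum_i c_n r_i^{n-\alpha}\delta_{x_i}$ attached to an arbitrary covering and take the infimum over coverings. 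Your route is more elementary and self-contained (no Frostman, no passage through compact subsets and inner regularity), at the cost of losing the structural parallel with the capacitary/equilibrium-measure formulation; both are valid, and the constants are comparable.
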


\begin{remark} In the case $\alpha=0$ each of the conditions {\rm (1)--(3)} is equivalent to $\sigma \in L^{\frac{q}{1-q}, \infty}(dx)$. 
\end{remark}

	\begin{proof}
		(1) $\Rightarrow$ (2) Let $K\subset E$ be a compact set in $\R^n$ such that $H^{n-\alpha}(K)>0$. 
It follows from  Frostman's theorem (see the proof of Theorem 5.1.12 in \cite{AH}) that there exists a measure $\nu$ supported on $K$ such that $\nu(K)\le H^{n-\alpha}(K)$, 
and, for every $x\in K$ there exists a cube $Q$ such that $x \in Q$ and $|Q|_\nu\ge c \, |Q|^{1-\frac{\alpha}{n}}$, where $c$ depends only on $n$ and $\alpha$.  		
		Hence,
		$$
		M_\alpha \nu(x) \ge \sup_{Q \ni x} \frac{|Q|_\nu}{|Q|^{1-\frac{\alpha}{n}}}\ge c \quad \text{for all} \, \, x \in K, 
		$$
		where $c$ depends only on $n$ and $\alpha$. Consequently, 
		$$
		c^q \, \sigma(K) \le \Vert M_\alpha \nu\Vert^q_{L^{q, \infty} (\sigma)}\le \varkappa_w^q 
		\Big(H^{n-\alpha}(K)\Big)^q.
		$$
		
		If $H^{n-\alpha}(E)=0$, then $H^{n-\alpha}(K)=0$ for every compact set $K\subset E$, and 
		consequently $\sigma(E)=0$. Otherwise, 
		$$\sigma(K)\le \varkappa_w^q  \Big(H^{n-\alpha}(K)\Big)^q\le \varkappa_w^q \Big(H^{n-\alpha}(K)\Big)^q,
		$$
		for every compact set $K\subset E$, which proves (2) with $C=c^{-q}\varkappa_w^q$.

		(2) $\Rightarrow$ (3)
			Let $E_t \defeq \{ x: M_\alpha \sigma (x) > t \}$, where $t>0$. Let $K \subset E_t$ be a compact set. 
			Then for each $x \in K$ there exists $Q_x \ni x$ such that
			\[ \frac{\sigma(Q_x)}{|Q_x|^{1-(\frac{\alpha}{n})}} > t. \]
			
			Now consider the collection $\{ Q_x \}_{x \in K}$, which forms a cover of $K$.
			By the Besicovitch covering lemma, we can find a  subcover $\{Q_i\}_{i \in I}$, where $I$ is a countable index set, such that $K \subset \bigcup_{i \in I} Q_i$ and $x \in K$ is contained in at most $b_n$ sets in $\{ Q_i\}$.
			By (2), we have
				\[ \sigma(K) \le [ H^{n-\alpha}(K) ]^q, \]
			and by the definition of the Hausdorff content we have
				\[  H^{n-\alpha}(K) \le \sum |Q_i|^{1-(\alpha / n)}.\]
			Since $\{Q_i\}$ have bounded overlap, we have
				\[ \sum_{i \in I} \sigma(Q_i) \le b_n \sigma(K). \]
			Thus, 
				\[ \sigma(K) \le \left( b_n \frac{\sigma(K)}{t} \right)^q, \]
			which shows that
				\[ t^{\frac{q}{1-q}} \sigma(K) \le (b_n)^{\frac{1}{1-q}} < + \infty. \]
			Taking the supremum over all $K \subset E_t$ in the preceding inequality, we deduce  $M_\alpha \sigma \in L^{\frac{q}{1-q}, \infty}(\sigma)$.

					(3) $\Rightarrow$ (1). We can assume again that $M_\alpha$ is the centered 
					fractional maximal function, since it is equivalent to the uncentered version. 
				Suppose that $M_\alpha \sigma \in L^{\frac{q}{1-q}, \infty}(\sigma)$. 
					Let $\nu \in \mathcal{M}(\R^n)$. 
					Then, as in the case of the strong-type inequality, 
				\begin{align*}
		\frac{M_\alpha \nu(x)}{M_\alpha \sigma(x)}
			&= \frac{ \sup_{r>0} \frac{|B(x, r)|_\nu}{|B(x, r)|^{1-\frac{\alpha}{n}}}}{ \sup_{\rho>0} \frac{|B(x, \rho)|_\sigma}{|B(x, \rho)|^{1-\frac{\alpha}{n}}}} \\
			&\le \sup_{r>0} \frac{ |B(x, r)|_\nu}{|B(x, r)|_\sigma}=: M_\sigma \nu(x).
	\end{align*}
	Thus, by H\"{o}lder's inequality for weak $L^p$-spaces,
	\begin{align*}
		\Vert M_\alpha \nu \Vert_{L^{q, \infty}(\sigma)}
			&\le \Vert (M_\alpha \sigma) \, (M_\sigma \nu) \Vert_{L^{q, \infty}(\sigma)} \\
			&\le \Vert M_\alpha \sigma\Vert_{L^{\frac{q}{1-q}, \infty}(\sigma)} \, 
			\Vert M_\sigma \nu \Vert_{L^{1, \infty}(\sigma)}  \\
			&\le c \Vert M_\alpha \sigma\Vert_{L^{\frac{q}{1-q}, \infty}(\sigma)} \,  \Vert \nu \Vert,
				\end{align*}
				where in the last line 
				we have used the $(1, 1)$-weak-type  maximal function inequality for the centered maximal function $M_\sigma \nu$. \end{proof}
				
				We now characterize weak-type $(1, q)$-inequalities \eqref{dyadic-ineq} 
								for the generalized dyadic maximal operator $M_\rho$ defined by \eqref{dyadic-max}. 
								The corresponding $(p, q)$-inequalities in the case $0<q<p$ and $p>1$ 
								were characterized in \cite{V1}. The results obtained in \cite{V1} for weak-type inequalities remain 
								valid in the case $p=1$, but some elements of the proofs must be modified as indicated 
								below. 
							\begin{theorem}
		Let $\sigma \in \mathcal{M}^{+}(\R^n)$, $0<q<1$, and $0\le \alpha <n$. Then the following  conditions are equivalent:
		\begin{enumerate}
			\item There exists a positive constant $\varkappa_w$ such that \eqref{dyadic-ineq}  holds. \smallskip 
			
						\item   
	$M_\rho \sigma \in L^{\frac{q}{1-q}, \infty}(\sigma)$.
		\end{enumerate}
\end{theorem}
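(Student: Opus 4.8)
The plan is to prove the two implications $(1)\Rightarrow(2)$ and $(2)\Rightarrow(1)$ separately, exploiting the nesting property of the dyadic grid $\mathcal{Q}$ (any two cubes in $\mathcal{Q}$ are either disjoint or nested). This structure makes the argument considerably cleaner than in the continuous case and removes the need both for a covering lemma and for a Hausdorff-content characterization, which is why this theorem has only two equivalent conditions rather than three.

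For $(1)\Rightarrow(2)$, I would argue by testing the weak-type inequality against a single, well-chosen measure. Fix $t>0$ and set $E_t \defeq \{x\in\R^n : M_\rho\sigma(x) > t\}$. The crucial observation is that if $x\in E_t$ and $Q\in\mathcal{Q}$ witnesses this, i.e. $x\in Q$ and $\rho_Q\,\sigma(Q) > t$, then in fact $Q\subseteq E_t$, since every $y\in Q$ satisfies $M_\rho\sigma(y)\ge \rho_Q\,\sigma(Q) > t$. Consequently, testing $(1)$ with $\nu \defeq \sigma_{E_t}$ gives $\rho_Q\,\nu(Q) = \rho_Q\,\sigma(Q) > t$, so that $M_\rho\nu(x) > t$ for every $x\in E_t$; that is, $E_t \subseteq \{M_\rho\nu > t\}$. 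Applying $(1)$ then yields
\[
\sigma(E_t) \le \sigma\big(\{M_\rho\nu > t\}\big) \le \Big(\frac{\varkappa_w\,\Vert\nu\Vert}{t}\Big)^q = \Big(\frac{\varkappa_w\,\sigma(E_t)}{t}\Big)^q .
\]
Provided $\sigma(E_t)<\infty$, one divides to obtain $t^{\frac{q}{1-q}}\,\sigma(E_t)\le \varkappa_w^{\frac{q}{1-q}}$, which is precisely $(2)$.

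For $(2)\Rightarrow(1)$, I would use a pointwise factorization combined with weak-type Hölder. Introduce the dyadic maximal operator relative to $\sigma$, namely $M_\sigma^{\mathcal{Q}}\nu(x)\defeq \sup_{Q\in\mathcal{Q}:\,Q\ni x}\frac{\nu(Q)}{\sigma(Q)}$. Writing $\rho_Q\,\nu(Q) = \big(\rho_Q\,\sigma(Q)\big)\cdot\frac{\nu(Q)}{\sigma(Q)}$ and taking the supremum over $Q\ni x$ gives the pointwise bound $M_\rho\nu(x)\le M_\rho\sigma(x)\cdot M_\sigma^{\mathcal{Q}}\nu(x)$. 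Then by Hölder's inequality for weak-$L^p$ spaces, with exponents $\tfrac{q}{1-q}$ and $1$ (so that $\tfrac{1-q}{q}+1=\tfrac1q$),
\[
\Vert M_\rho\nu\Vert_{L^{q,\infty}(\sigma)} \le C\,\Vert M_\rho\sigma\Vert_{L^{\frac{q}{1-q},\infty}(\sigma)}\,\Vert M_\sigma^{\mathcal{Q}}\nu\Vert_{L^{1,\infty}(\sigma)} \le C\,\Vert M_\rho\sigma\Vert_{L^{\frac{q}{1-q},\infty}(\sigma)}\,\Vert\nu\Vert,
\]
where the last step is the dyadic weak-type $(1,1)$ bound $\Vert M_\sigma^{\mathcal{Q}}\nu\Vert_{L^{1,\infty}(\sigma)}\le\Vert\nu\Vert$ (constant $1$), which follows immediately from the nesting of dyadic cubes: decompose $\{M_\sigma^{\mathcal{Q}}\nu > t\}$ into the maximal cubes $Q_j$ with $\nu(Q_j) > t\,\sigma(Q_j)$ and sum. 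By $(2)$ the factor $\Vert M_\rho\sigma\Vert_{L^{\frac{q}{1-q},\infty}(\sigma)}$ is finite, and $(1)$ follows.

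The main obstacle I anticipate lies not in the algebra but in two finiteness/existence points. First, in $(1)\Rightarrow(2)$ the division requires $\sigma(E_t)<\infty$; I would handle this by the exhaustion used for the analogous integral-operator theorem, replacing $\sigma$ with $\sigma_F$ for compact $F$ (for which $(1)$ holds with the same constant since $\sigma_F\le\sigma$), running the argument with $E_t^F=\{M_\rho\sigma_F>t\}$ where $\sigma_F(E_t^F)<\infty$, and then letting $F\uparrow\R^n$ so that $\sigma_F\uparrow\sigma$, $M_\rho\sigma_F\uparrow M_\rho\sigma$, and the Monotone Convergence Theorem recovers the bound for $\sigma(E_t)$. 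Second, the factorization in $(2)\Rightarrow(1)$ formally involves $\nu(Q)/\sigma(Q)$ on cubes with $\sigma(Q)=0$; such cubes are $\sigma$-null and can be discarded without affecting the $L^{q,\infty}(\sigma)$ norm, exactly as in the proof of the corresponding statement for $M_\alpha$.
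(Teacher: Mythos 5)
Your proof is correct, and while your $(2)\Rightarrow(1)$ direction coincides with the paper's (the pointwise factorization $M_\rho\nu \le (M_\rho\sigma)\,(M_\sigma\nu)$ off the $\sigma$-null union of cubes with $\sigma(Q)=0$, weak-type H\"older with exponents $\tfrac{q}{1-q}$ and $1$, and the dyadic weak $(1,1)$ bound for $M_\sigma$), your $(1)\Rightarrow(2)$ direction is genuinely different. The paper tests \eqref{dyadic-ineq} against measures of the form $d\nu=\sup_Q(\lambda_Q\chi_Q)\,d\sigma$ for finite nonnegative sequences $\{\lambda_Q\}$, reducing the problem to a weak-type inequality for $\sup_Q(\lambda_Q\rho_Q\chi_Q)$, and then invokes Theorem 1.1 and Remark 1.2 of \cite{V1} (a Pisier-type factorization characterization) to conclude $M_\rho\sigma\in L^{\frac{q}{1-q},\infty}(\sigma)$. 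You instead test against the single measure $\nu=\sigma_{E_t}$, using the observation that any witness cube for $M_\rho\sigma(x)>t$ is entirely contained in $E_t$, so that $E_t\subseteq\{M_\rho\nu>t\}$ and the weak-type bound self-improves to $\sigma(E_t)\le(\varkappa_w\sigma(E_t)/t)^q$; this is self-contained and elementary, and mirrors the level-set/restriction argument the paper itself uses for the integral-operator and $M_\alpha$ weak-type theorems (there via the weak maximum principle or the Besicovitch lemma, neither of which you need since the containment of the witness cube in the level set is automatic). Your two finiteness caveats are the right ones and are handled correctly: the compact exhaustion $\sigma_F\uparrow\sigma$ gives $\sigma_F(E_t^F)<\infty$ and $M_\rho\sigma_F\uparrow M_\rho\sigma$, so the uniform bound passes to the limit, and discarding cubes with $\sigma(Q)=0$ costs nothing in $L^{q,\infty}(\sigma)$. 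What the paper's route buys is uniformity with the $(p,q)$-theory of \cite{V1} for $p>1$; what yours buys is a proof that needs no external input.
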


\begin{proof} (2) $\Rightarrow$ (1) The proof of this implication is similar to the case of fractional maximal operators. Let $\nu \in \mathcal{M}(\R^n)$. Denoting by $Q, P \in \mathcal{Q}$ dyadic cubes in $\R^n$, we estimate 
				\begin{align*}
		\frac{M_\rho \nu(x)}{M_\rho \sigma(x)} 
			&  = \frac {\sup_{Q \ni x} (\rho_Q \, |Q|_\nu)}{\sup_{P \ni x} (\rho_P \, |P|_\sigma)}
		 \\
			&\le \sup_{Q \ni x} \frac {|Q|_\nu}{|Q|_\sigma}=: M_\sigma \nu(x).
	\end{align*}
	By H\"{o}lder's inequality for weak $L^p$-spaces,
	\begin{align*}
		\Vert M_\rho \nu \Vert_{L^{q, \infty}(\sigma)}
			&\le \Vert (M_\rho \sigma) \, (M_\sigma \nu) \Vert_{L^{q, \infty}(\sigma)} \\
			&\le \Vert M_\rho \sigma\Vert_{L^{\frac{q}{1-q}, \infty}(\sigma)} \, 
			\Vert M_\sigma \nu \Vert_{L^{1, \infty}(\sigma)}  \\
			&\le c \Vert M_\rho \sigma\Vert_{L^{\frac{q}{1-q}, \infty}(\sigma)} \,  \Vert \nu \Vert,
				\end{align*}
				by the $(1, 1)$-weak-type  maximal function inequality for the dyadic maximal function $M_\sigma$. 
				
				(1) $\Rightarrow$ (2) We set $f = \sup_{Q} (\lambda_Q \chi_Q)$ and $d \nu = f \, d \sigma$, where 
				$\{\lambda_Q\}_{Q \in \mathcal{Q}}$ is a finite sequence of non-negative reals. Then obviously 
			$$
		 M_\rho \nu (x)\ge \sup_{Q} (\lambda_Q \rho_Q \chi_Q), \quad \text{and} \quad 
	  \Vert\nu\Vert \le \sum_Q \lambda_Q \, |Q|_\sigma.				
		$$
	By (1), for all $\{\lambda_Q\}_{Q \in \mathcal{Q}}$, 
		$$
		\Vert \sup_{Q} ( \lambda_Q \rho_Q \chi_Q) \Vert_{L^{q, \infty} (\sigma)}\le \varkappa_v \, 
		\sum_Q \lambda_Q \, |Q|_\sigma. 
		$$ 
		Hence, by Theorem 1.1 and Remark 1.2 in \cite{V1}, it follows that (2) holds. 

\end{proof}
	
		\section{Carleson Measures for Poisson Integrals}			 
		
		In this section we treat $(1, q)$-Carleson measure inequalities for Poisson integrals with respect to Carleson measures  $\sigma\in \mathcal{M}^{+}(\R^{n+1}_{+})$		in the upper half-space  $\R^{n+1}_{+} = (x, y)\colon x \in \R^n, y>0$. The corresponding weak-type $(p, q)$-inequalities for all $0<q<p$ as well as strong-type $(p, q)$-inequalities for $0<q<p$ and $p>1$, were characterized  	in \cite{Vid}. Here we consider strong-type inequalities of the type 
		\begin{equation}\label{carl}
		\Vert \mathbf{P} \nu\Vert_{L^q(\R^{n+1}_{+}, \sigma)}\le \varkappa \, \Vert \nu\Vert_{\mathcal{M}^{+}(\R^{n})}, 
		\quad \text{{\rm for all}} \, \, \nu \in \mathcal{M}^{+}(\R^n), 
		\end{equation}
		for some constant $\varkappa>0$, where $\mathbf{P}\nu$ is the Poisson 
		integral of  $\nu \in  \mathcal{M}^{+}(\R^{n})$ defined by 
		$$\mathbf{P} \nu(x, y) \defeq \int_{\R^n} P(x-t, y) d \nu(t), \quad (x, y) \in \R^{n+1}_{+}.$$
		Here $P(x, y)$ denotes the Poisson kernel associated with $\R^{n+1}_{+}$. 
		
		By $\mathbf{P}^{*} \mu$ we denote the formal adjoint (balayage) operator defined, for  $ \mu \in  \mathcal{M}^{+}(\R^{n+1}_{+})$, by 
		$$\mathbf{P}^{*} \mu(t) \defeq \int_{\R^{n+1}_{+}} P(x-t, y) d \mu (x, y), \quad t \in \R^n.$$ 
		We will also need the symmetrized potential defined, for $ \mu \in  \mathcal{M}^{+}(\R^{n+1}_{+}) $, by 
		$$\mathbf{P}\mathbf{P}^{*} \mu(x, y) \defeq \mathbf{P}\Big [(\mathbf{P}^{*} \mu)dt\Big] = \int_{\R^{n+1}_{+}} P(x-\tilde x, y+\tilde y) d \mu (\tilde x, \tilde y), \quad (x, y) \in \R^{n+1}_{+}.$$ 
		As we will demonstrate below, the kernel of $\mathbf{P}\mathbf{P}^{*} \mu$ satisfies the weak maximum 
		principle with constant $h=2^{n+1}$.

	\begin{theorem}\label{carl-thm}  	Let $\sigma\in \mathcal{M}^{+}(\R^{n+1}_{+})$, and let $0<q<1$.  Then 
		inequality (\ref{carl}) holds if and only if there exists a function $u> 0$ such that  
		$$u \in L^q(\R^{n+1}_{+}, \sigma), \quad \text{{\rm and}} \quad  u \ge \mathbf{P} \mathbf{P}^{*}(u^q \sigma) \quad {\rm in} \, \, \R^{n+1}_{+}.$$ 
		
		Moreover, if (\ref{carl}) holds, then a positive solution $u= \mathbf{P} \mathbf{P}^{*}(u^q \sigma)$ 
		such that $u \in L^q(\R^{n+1}_{+}, \sigma)$ 
		can be constructed as follows: $u = \lim_{j \to \infty} u_j$,    where  
		\begin{equation}
		\label{u-j-j}
 u_{j+1} \defeq \mathbf{P} \mathbf{P}^{*} (u_j^q \sigma), \quad j=0, 1, \ldots , \quad  u_0 \defeq c_0 (\mathbf{P} \mathbf{P}^{*} \sigma)^{\frac{1}{1-q}}, 
 		\end{equation}
		for a small enough constant $c_0>0$ (depending only on $q$ and $n$),  which ensures that $u_{j+1} \ge u_j$. 
		In particular, $u \ge c_0 \, (\mathbf{P} \mathbf{P}^{*} \sigma)^{\frac{1}{1-q}}$. 
	\end{theorem}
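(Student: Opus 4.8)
The plan is to reduce inequality \eqref{carl} to a strong-type $(1,q)$-inequality for the symmetrized integral operator $\mathbf{G}\defeq\mathbf{P}\mathbf{P}^{*}$ acting on measures on $\R^{n+1}_{+}$, and then to invoke Theorem \ref{strong-thm}. The kernel of $\mathbf{G}$ is $G\big((x,y),(\tilde x,\tilde y)\big)=P(x-\tilde x,\,y+\tilde y)$, which is symmetric, hence quasi-symmetric with $a=1$ (as $P$ is even in its first variable), and strictly positive, hence non-degenerate with respect to any $\sigma$. Thus, once the weak maximum principle for $\mathbf{G}$ with $h=2^{n+1}$ stated above is established, $\mathbf{G}$ satisfies all hypotheses of Theorem \ref{strong-thm} on $\Omega=\R^{n+1}_{+}$.

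First I would prove the equivalence of \eqref{carl} with the $(1,q)$-inequality
\begin{equation}\label{starineq}
\Vert\mathbf{G}\mu\Vert_{L^{q}(\R^{n+1}_{+},\sigma)}\le\varkappa\,\Vert\mu\Vert,\qquad\text{for all }\mu\in\mathcal{M}^{+}(\R^{n+1}_{+}).
\end{equation}
For \eqref{carl}$\Rightarrow$\eqref{starineq}, given $\mu\in\mathcal{M}^{+}(\R^{n+1}_{+})$ I set $\nu=(\mathbf{P}^{*}\mu)\,dt\in\mathcal{M}^{+}(\R^{n})$; then $\mathbf{P}\nu=\mathbf{G}\mu$ by definition, while $\Vert\nu\Vert=\int_{\R^{n}}\mathbf{P}^{*}\mu\,dt=\Vert\mu\Vert$ by Fubini and the normalization $\int_{\R^{n}}P(x-t,y)\,dt=1$ for every $y>0$. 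Applying \eqref{carl} to this $\nu$ gives \eqref{starineq} with the same constant. For the converse, given $\nu\in\mathcal{M}^{+}(\R^{n})$ and $\epsilon>0$ I lift $\nu$ to the hyperplane $\{y=\epsilon\}$, letting $\mu_{\epsilon}$ be the image of $\nu$ under $t\mapsto(t,\epsilon)$. Then $\Vert\mu_{\epsilon}\Vert=\Vert\nu\Vert$ and $\mathbf{G}\mu_{\epsilon}(x,y)=\mathbf{P}\nu(x,y+\epsilon)$, so \eqref{starineq} yields $\Vert\mathbf{P}\nu(\cdot,\cdot+\epsilon)\Vert_{L^{q}(\sigma)}\le\varkappa\Vert\nu\Vert$ uniformly in $\epsilon$. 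Since $\mathbf{P}\nu(x,y+\epsilon)\to\mathbf{P}\nu(x,y)$ as $\epsilon\to0^{+}$ for each fixed $(x,y)\in\R^{n+1}_{+}$ (dominated convergence, using $P(x-t,y+\epsilon)\le 2\,P(x-t,y)$ for $0<\epsilon<y$), Fatou's lemma gives \eqref{carl}.

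With \eqref{carl}$\Leftrightarrow$\eqref{starineq} in hand, the main assertion follows directly from Theorem \ref{strong-thm} applied to $\mathbf{G}=\mathbf{P}\mathbf{P}^{*}$: \eqref{starineq} holds if and only if there is a positive supersolution $u\in L^{q}(\R^{n+1}_{+},\sigma)$ with $u\ge\mathbf{P}\mathbf{P}^{*}(u^{q}\sigma)$, and, by non-degeneracy, if and only if there is a positive solution. For the explicit construction and the lower bound $u\ge c_{0}(\mathbf{P}\mathbf{P}^{*}\sigma)^{1/(1-q)}$ I would argue exactly as in the strong-type theorem for $M_\alpha$ and in the proof of Theorem \ref{one-pde-eq}: for $c_{0}=c_{0}(q,n)$ small enough, $u_{0}=c_{0}(\mathbf{G}\sigma)^{1/(1-q)}$ is a subsolution, so $u_{1}=\mathbf{G}(u_{0}^{q}\sigma)\ge u_{0}$ and inductively $u_{j}\le u_{j+1}$; since \eqref{starineq} provides, via Theorem \ref{strong-thm} and Corollary \ref{cor}, a supersolution $v\in L^{q}(\sigma)$ dominating $u_{0}$, the iterates remain below $v$, and $u=\lim_{j}u_{j}$ solves $u=\mathbf{G}(u^{q}\sigma)$ in $L^{q}(\sigma)$ by monotone convergence, with $u\ge u_{0}$.

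The main obstacle is the weak maximum principle for the symmetrized Poisson kernel, the one genuinely analytic input and the reason $\mathbf{P}\mathbf{P}^{*}$ falls under Theorem \ref{strong-thm}. I expect to prove it by exploiting that $\mathbf{G}\mu$ is the harmonic extension of the boundary density $\mathbf{P}^{*}\mu$, so that $\sup_{\R^{n+1}_{+}}\mathbf{G}\mu$ is governed by the boundary values, and then dominating each boundary point $(t_{0},0)$ by the point of $S(\mu)$ nearest to it in the quasi-metric $|x-\tilde x|+y+\tilde y$; the doubling of $P$ in the height variable then produces the explicit constant $h=2^{n+1}$. Everything else, namely the subsolution estimate and the monotone iteration, is routine and parallels the arguments already given.
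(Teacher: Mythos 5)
Your proposal is correct, and it follows the paper's overall architecture: reduce \eqref{carl} to the $(1,q)$-inequality for the symmetrized operator $\mathbf{P}\mathbf{P}^{*}$, verify quasi-symmetry, non-degeneracy and the weak maximum principle with $h=2^{n+1}$, and then invoke Theorem \ref{strong-thm} together with the standard monotone iteration from the subsolution $c_0(\mathbf{P}\mathbf{P}^{*}\sigma)^{1/(1-q)}$. Your sketch of the weak maximum principle (nearest point of $S(\mu)$ to a boundary point, plus the doubling of the Poisson kernel in the height variable) is exactly the paper's geometric argument, so no divergence there. The genuine difference is in the converse direction of the equivalence \eqref{carl}$\Leftrightarrow$\eqref{carl-1}: the paper deduces \eqref{carl} from \eqref{carl-1} via Maurey's factorization theorem, producing $F\in L^{1}(\sigma)$ with $\sup_{\R^{n+1}_{+}}\mathbf{P}\mathbf{P}^{*}(F^{1-1/q}d\sigma)\le\varkappa$, passing to the boundary as $y\downarrow 0$, and then using duality/Jensen; you instead lift $\nu$ to the slice $\{y=\epsilon\}$, observe that $\mathbf{P}\mathbf{P}^{*}\mu_{\epsilon}(x,y)=\mathbf{P}\nu(x,y+\epsilon)$ with $\Vert\mu_{\epsilon}\Vert=\Vert\nu\Vert$, and let $\epsilon\to 0^{+}$ using the bound $P(x-t,y+\epsilon)\le 2P(x-t,y)$ and Fatou. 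Your argument is correct and more elementary --- it avoids Maurey's theorem entirely and uses only the explicit form of the symmetrized kernel --- whereas the paper's factorization route is softer and produces the auxiliary weight $F$ (which is of independent use, e.g.\ in constructing supersolutions), at the cost of importing a nontrivial functional-analytic tool. Both yield the same constant $\varkappa$. The remaining pieces of your write-up (normalization $\int_{\R^n}P(x-t,y)\,dt=1$ giving $\Vert\nu\Vert=\Vert\mu\Vert$, strict positivity of the kernel giving non-degeneracy, and the iteration bounded above by a supersolution) are all sound and match the paper's reliance on Theorem \ref{strong-thm} and \cite{QV}.
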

	\begin{proof} We first prove that \eqref{carl} holds if and only if 
	\begin{equation}\label{carl-1}
		\Vert \mathbf{P} \mathbf{P}^{*} \mu\Vert_{L^q(\R^{n+1}_{+}, \sigma)}\le \varkappa \, \Vert \mu\Vert_{\mathcal{M}^{+}(\R^{n+1}_{+})}, 
		\quad \text{{\rm for all}} \, \, \mu \in \mathcal{M}^{+}(\R^{n+1}_{+}). 
		\end{equation}
 Indeed, letting $\nu = \mathbf{P}^{*} \mu$ in \eqref{carl} yields 
\eqref{carl-1} with the same embedding constant $\varkappa$. 

Conversely, suppose that \eqref{carl-1} holds. Then by Maurey's factorization theorem (see \cite{M}), there exists 
$F \in  L^1(\R^{n+1}_{+}, \sigma)$ such that $F>0 \, \, d \sigma$-a.e., and 
\begin{equation}\label{maurey}
	\Vert F \Vert_{L^1(\R^{n+1}_{+}, \sigma)}\le 1, \quad
  	\sup_{(x, y) \in \R^{n+1}_{+}}  \mathbf{P} \mathbf{P}^{*} (F^{1-\frac{1}{q}} d \sigma) (x, y)\le \varkappa. 
		\end{equation}
		By letting $y \downarrow 0$ in \eqref{maurey} and using the Monotone Convergence Theorem, we deduce  
\begin{equation}\label{maurey-1}
		\sup_{x \in \R^{n}}  \mathbf{P}^{*} (F^{1-\frac{1}{q}} d \sigma) (x)\le \varkappa. 
		\end{equation}
	Hence, by Jensen's inequality  and \eqref{maurey-1}, for any $\nu \in  \mathcal{M}^{+}(\R^n)$, we have 
	$$
	\Vert \mathbf{P} \nu\Vert_{L^q(\R^{n+1}_{+}, \sigma)}\le \Vert \mathbf{P} \nu\Vert_{L^1(\R^{n+1}_{+}, \, F^{1-\frac{1}{q}} d \sigma)} = \Vert \mathbf{P}^{*} (F^{1-\frac{1}{q}} d \sigma)\Vert_{L^1(\R^{n}, \, d \nu)}
	\le \varkappa \, \Vert \nu\Vert_{\mathcal{M}^{+}(\R^{n})}. 
	$$
	
	We next show that the kernel of $\mathbf{P} \mathbf{P}^{*}$ satisfies the weak maximum principle with constant 
	$h=2^{n+1}$.   Indeed, suppose $\mu \in \mathcal{M}^{+}(\R^{n+1}_{+})$, and 
	$$
	 \mathbf{P} \mathbf{P}^{*} \mu(\tilde x, \tilde y) \le M, \quad \text{for all} \, \,  (\tilde x, \tilde y) \in S(\mu). 
	$$
	Without loss of generality we may assume that $S(\mu) \Subset \R^{n+1}_{+}$ is a compact set. 
	For $t \in \R^n$, let  
	 $(x_0, y_0)\in S(\mu)$ be a point such that 
	$$|(t, 0) - (x_0, y_0)|=\text{dist} \Big((t, 0), S(\mu)\Big).$$ 
	Then by the triangle inequality, for any $(\tilde x, \tilde y) \in S(\mu)$, 
	$$
	|(x_0, y_0) - (\tilde x, -\tilde y)|\le  | (x_0, y_0) - (t, 0) | + |(t, 0)- (\tilde x, -\tilde y)|\le 2 |(t, 0)- (\tilde x, \tilde y)|. 
	$$
	Hence, 
	$$
	\sqrt{|t-\tilde x|^2 + \tilde y^2} \ge \frac{1}{2} \, \sqrt{\Big [ |x_0-\tilde x|^2 + (y_0 + \tilde y)^2\Big]}. 
	$$
	It follows that, for all $t \in \R^n$ and $(\tilde x, \tilde y) \in S(\mu)$, we have  
	$$
	P(t-\tilde x, \tilde y)\le 2^{n+1} P(x_0-\tilde x, y_0 +\tilde y). 
	$$
	Consequently,  for all $t \in \R^n$, 
	$$
	\mathbf{P}^{*} \mu(t) \le 2^{n+1} \mathbf{P} \mathbf{P}^{*} \mu(x_0, y_0)\le  2^{n+1} M. 
	$$
	Applying the Poisson integral $\mathbf{P}[dt]$ to both sides of the preceding inequality, we obtain 
	$$
 \mathbf{P} \mathbf{P}^{*} \mu(x, y) \le 2^{n+1} M \quad \text{for all} \,\, (x, y) \in   \R^{n+1}_{+}. 
	$$ 
	
	This proves that the weak maximum principle  holds for $\mathbf{P} \mathbf{P}^{*}$ 
	with $h=2^{n+1}$. It follows from Theorem~\ref{strong-thm} that \eqref{carl} holds if and only if there exists a non-trivial 
	$u \in L^q(\R^{n+1}_{+}, \sigma)$ such that $u \ge \mathbf{P} \mathbf{P}^{*}(u^q d\sigma)$. 
	Moreover, a positive solution $u= \mathbf{P} \mathbf{P}^{*}(u^q \sigma) $ can be constructed as in the statement of Theorem~\ref{carl-thm} (see details in \cite{QV}).  
	\end{proof}
	
	\begin{cor}\label{carl-cor}  Under the assumptions of Theorem~\ref{carl-thm}, inequality 	 (\ref{carl}) holds if and only if there exists a function $\phi \in L^1(\R^n)$, $\phi>0$ a.e., such that  
		$$  \phi \ge \mathbf{P}^{*} \Big [(\mathbf{P}\phi)^q d \sigma\Big] \quad {\rm a.e. \, \, in} \, \, \R^{n}.$$ 
		Moreover, if 	 (\ref{carl}) holds, then there exists a positive solution $\phi\in L^1(\R^n)$ to the equation 
		$  \phi = \mathbf{P}^{*} \Big [(\mathbf{P}\phi)^q d \sigma\Big]$.  
	\end{cor}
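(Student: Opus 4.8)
The plan is to deduce the corollary from Theorem~\ref{carl-thm} by setting up an exact dictionary between half-space supersolutions $u$ on $\R^{n+1}_{+}$ and boundary functions $\phi$ on $\R^n$, via $u = \mathbf{P}\phi$ and $\phi = \mathbf{P}^{*}(u^q\sigma)$. The point is that the factorization $\mathbf{P}\mathbf{P}^{*}\mu = \mathbf{P}\big[(\mathbf{P}^{*}\mu)\,dt\big]$, built into the definition of the symmetrized kernel, makes this correspondence reversible, so that the half-space inequality $u \ge \mathbf{P}\mathbf{P}^{*}(u^q\sigma)$ and the boundary inequality $\phi \ge \mathbf{P}^{*}\big[(\mathbf{P}\phi)^q d\sigma\big]$ are two faces of the same statement.

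For the forward implication I would start from the half-space side. Once \eqref{carl} holds, Theorem~\ref{carl-thm} furnishes $u \in L^q(\R^{n+1}_{+}, \sigma)$, $u>0$, with $u \ge \mathbf{P}\mathbf{P}^{*}(u^q\sigma)$. Setting $\phi \defeq \mathbf{P}^{*}(u^q\sigma)$, the supersolution inequality reads $u \ge \mathbf{P}\phi$, so monotonicity of $t\mapsto t^q$ on $[0,\infty)$ together with monotonicity of $\mathbf{P}^{*}$ yields
$$\phi = \mathbf{P}^{*}(u^q\sigma) \ge \mathbf{P}^{*}\big[(\mathbf{P}\phi)^q d\sigma\big],$$
the desired boundary inequality. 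Strict positivity $\phi>0$ a.e.\ is immediate from positivity of the Poisson kernel, and the integrability $\phi \in L^1(\R^n)$ comes from Fubini and the normalization $\int_{\R^n} P(x-t,y)\,dt = 1$, which give $\Vert\phi\Vert_{L^1(\R^n)} = \int_{\R^{n+1}_{+}} u^q\,d\sigma = \Vert u\Vert_{L^q(\sigma)}^q < \infty$.

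For the converse I would run the dictionary backwards. Given $\phi \in L^1(\R^n)$, $\phi>0$ a.e., with $\phi \ge \mathbf{P}^{*}\big[(\mathbf{P}\phi)^q d\sigma\big]$, set $u \defeq \mathbf{P}\phi$. Applying the order-preserving operator $\mathbf{P}$ to the boundary inequality and invoking the factorization gives $u = \mathbf{P}\phi \ge \mathbf{P}\mathbf{P}^{*}(u^q\sigma)$, so $u$ is a half-space supersolution; it is strictly positive since $P>0$. Integrating the boundary inequality over $\R^n$, and again using that the Poisson kernel has total mass one, produces $\Vert u\Vert_{L^q(\sigma)}^q = \int_{\R^{n+1}_{+}} (\mathbf{P}\phi)^q\,d\sigma \le \Vert\phi\Vert_{L^1(\R^n)} < \infty$, so $u \in L^q(\sigma)$, and Theorem~\ref{carl-thm} returns \eqref{carl}. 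The ``moreover'' assertion follows from the same correspondence: the solution $u = \mathbf{P}\mathbf{P}^{*}(u^q\sigma)$ supplied by Theorem~\ref{carl-thm} gives $\phi \defeq \mathbf{P}^{*}(u^q\sigma)$ with $u = \mathbf{P}\phi$, turning the solution identity into $\phi = \mathbf{P}^{*}\big[(\mathbf{P}\phi)^q d\sigma\big]$ verbatim, with $\phi \in L^1(\R^n)$ and $\phi>0$ as above.

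Since the substantive analytic work already resides in Theorem~\ref{carl-thm}, I do not anticipate a deep obstacle here; the corollary is essentially a reformulation. The only point demanding care is the Fubini bookkeeping that makes the two integrability estimates \emph{exact}, namely the identification of $\Vert\phi\Vert_{L^1(\R^n)}$ with $\Vert u\Vert_{L^q(\sigma)}^q$ via $\int_{\R^n} P(\cdot,y) = 1$; this requires $u^q\sigma$ to be a genuine nontrivial measure on $\R^{n+1}_{+}$ so that all integrals converge and the order of integration may be interchanged. Everything else reduces to monotonicity of the potentials $\mathbf{P}$, $\mathbf{P}^{*}$ and of the map $t\mapsto t^q$.
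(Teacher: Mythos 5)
Your proposal is correct and follows essentially the same route as the paper: apply Theorem~\ref{carl-thm}, pass between $u$ and $\phi$ via $\phi=\mathbf{P}^{*}(u^q\,d\sigma)$ and $u=\mathbf{P}\phi$ using the factorization $\mathbf{P}\mathbf{P}^{*}=\mathbf{P}[(\mathbf{P}^{*}\cdot)\,dt]$, and identify $\Vert\phi\Vert_{L^1(\R^n)}$ with $\Vert u\Vert_{L^q(\sigma)}^q$ through Fubini and the unit mass of the Poisson kernel. The only cosmetic difference is that you derive the forward inequality from a supersolution plus monotonicity, while the paper works directly with the exact solution $u=\mathbf{P}\mathbf{P}^{*}(u^q\,d\sigma)$, which yields the equality case at the same time.
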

	
	\begin{proof} If 	 (\ref{carl}) holds then by Theorem~\ref{carl-thm} there exists $u=\mathbf{P} \mathbf{P}^{*}(u^q d \sigma)$ such that $u>0$ and $u \in L^q(\R^{n+1}_{+}, \sigma)$. Setting $\phi = \mathbf{P}^{*} (u^q d \sigma)$, we see that 
	$$\mathbf{P}  \phi = \mathbf{P}\mathbf{P}^{*} (u^q d \sigma) = u,$$
	so that $\phi=\mathbf{P}^{*} [(\mathbf{P} \phi)^q d \sigma]$, and consequently  
	$$\Vert \phi\Vert_{L^1(\R^n)} = \Vert u\Vert^q_{L^q(\R^{n+1}_{+}, \sigma)}=\int_{\R^n} u(x, y) dx <\infty.$$

	Conversely, if there exists $\phi>0$, $\phi \in L^1(\R^n)$ such that  $ \phi \ge \mathbf{P}^{*} \Big [(\mathbf{P}\phi)^q d \sigma\Big]$, then letting $u=\mathbf{P}\phi$, we 
	see that $u$ is a positive harmonic function in $\R^{n+1}_{+}$ so that  
	 $$u = \mathbf{P}\phi \ge \mathbf{P}\mathbf{P}^{*} (u^q d \sigma),$$
	and for all $y>0$, 
	 $$
	 \Vert u\Vert^q_{L^q(\R^{n+1}_{+}, \sigma)} = \int_{\R^n} \Big[\mathbf{P}\mathbf{P}^{*} (u^q d \sigma)\Big] (x, y) \, dx \le \int_{\R^n} u(x, y) dx =\Vert \phi\Vert_{L^1(\R^n)}<\infty.
	$$ 
	Hence, inequality (\ref{carl}) holds  by Theorem~\ref{carl-thm}.  
	\end{proof}


\end{document}